\renewcommand{\leq}{\leqslant}
\renewcommand{\geq}{\geqslant}
\DeclareMathOperator{\id}{id}
\DeclareMathOperator{\Soc}{Soc}
\newcommand{\Z}{\mathbb{Z}}
\newcommand{\Zen}{\mathcal{Z}}
\newcommand{\Aut}{\operatorname{Aut}}
\newcommand{\Hol}{\operatorname{Hol}}
\newcommand{\Sym}{\mathfrak{S}}
\newcommand{\At}{\mathrm{At}}
\newcommand{\Atd}{\mathrm{At}^{\ast}}
\newcommand{\IGam}{\mathrm{I}_{\Gamma}}
\newcommand{\IG}{\mathrm{I}_G}
\newcommand{\gen}[1]{\left\langle #1 \right\rangle}
\newcommand{\genrel}[2]{\left\langle \ #1 \ \vline \ #2 \ \right\rangle}
\newcommand{\subcirc}[1]{\underset{#1}{\circ}}
\numberwithin{equation}{section}
\numberwithin{figure}{section}
\numberwithin{table}{section}
\newtheorem{thm}{Theorem}[section]
\newtheorem*{thm*}{Theorem}
\newtheorem{lem}[thm]{Lemma}
\newtheorem{pro}[thm]{Proposition}
\theoremstyle{definition}
\newtheorem{defn}[thm]{Definition}
\newtheorem*{defn*}{Definition}
\newtheorem*{question*}{Question}
\newtheorem*{convention*}{Convention}
\newtheorem{rem}[thm]{Remark}
\title{Groups of $\IG$-Type}
\author{Carsten Dietzel}
\date{\today}
\address[Carsten Dietzel]{Normandie Univ, UNICAEN, CNRS, LMNO, 14000 Caen, France}
\email{carsten.dietzel@unicaen.fr}
\begin{document}

\begin{abstract}
In this work, we address a question posed by Dehornoy et al. in the book \textit{Foundations of Garside Theory} that asks for a theory of groups of \textit{$\mathrm{I}_G$-type} when $G$ is a Garside group. In this article, we introduce a broader notion than the one suggested by Dehornoy et al.: given a left-ordered group $G$, we define a \textit{group of $\mathrm{I}_G$-type} as a left-ordered group whose partial order is isomorphic to those of $G$. Furthermore, we develop methods to give a characterization of groups of $\mathrm{I}_{\Gamma}$-type in terms of \textit{skew braces} when $\Gamma$ is an Artin-Tits group of spherical type and classify all groups of $\mathrm{I}_{\Gamma}$-type where $\Gamma$ is an irreducible spherical Artin-Tits group, therefore providing an answer to another question of Dehornoy et al. concerning $\mathrm{I}_{B_n}$ structures where $B_n$ is the braid group on $n$ strands with its canonical Garside structure.
\end{abstract}

\maketitle

\section*{Introduction}

The goal of this article is to set up an order-theoretical framework in order to provide an answer to the following Garside-theoretical question that was posed in the book \emph{Foundations of Garside Theory} by Dehornoy et al. \cite[Question 38]{Foundations_Garside}:

\begin{question*}
    Can one characterize the Garside groups that admit an $\IGam$-structure?
\end{question*}

Recall \cite[Chapter I]{Foundations_Garside} that a \emph{Garside group} is a group $G$ with a distinguished submonoid $G^+ \subseteq G$ - its \emph{Garside monoid} - such that
\begin{enumerate}
    \item $G$ is a left- and a right group of fractions for the monoid $G^+$, that is, $G = \{g^{-1}h : g,h \in G^+ \} = \{gh^{-1} : g,h \in G^+ \}$.
    \item There is a map $\nu: G^+ \to \Z_{\geq 0}$ such that $\nu(g) = 0 \Leftrightarrow g = e$ and $\nu(gh) \geq \nu(g) + \nu(h)$. for all $g,h \in G^+$.
    \item $G^+$ is a lattice with respect to left-divisibility, also $G^+$ is a lattice with respect to right-divisibility.
    \item There is a distinguished element $\Delta \in G^+$ - the \emph{Garside element} - such that the left- and right-divisors of $\Delta$ in $G^+$ coincide, form a finite set and generate $G^+$ as a monoid.
\end{enumerate}

This definition - although named after Garside - has only slowly emerged from Garside's solution of the conjugacy problem on braid groups \cite{Garside_braid}: after Garside's discovery of what is now known as a Garside structure on the braid groups, similar structures have first been discovered by Brieskorn and Saito \cite{ABS_Coxeter} in the slightly more general case of spherical Artin-Tits groups, leading to the solution of the conjugacy problem for these groups, before the notion of a Garside group was introduced in its final form by Dehornoy \cite{Dehornoy_Garside_groups}, in order to describe a broader class of groups that admit efficient solutions to difficult algorithmic problems, namely the word- and the conjugacy problem. 

Note that the axioms of a Garside group guarantee that $G^+$ is generated, as a monoid, by the set $\At(G)$ of \emph{atoms} in $G^+$, that is, those elements in $G^+ \setminus \{ e \}$ that have no proper divisors in $G^+$. As a consequence, $G$ is generated by $\At(G)$ as a group. Therefore, it makes sense to consider the \emph{directed} Cayley graph of a Garside group with respect to $\At(G)$, that is, the directed graph with vertex set $G$ where $g,h$ are connected by an edge $g \to h$ if and only if there is an $x \in \At(G)$ such that $gx = h$. A group of $\IG$-type is then defined by Dehornoy et al. \cite[pp.602-603]{Foundations_Garside} for a Garside group $G$ as a Garside group $H$ such that $H$ and $G$ have isomorphic directed Cayley graphs. We remark here that this is essentially an order-theoretic property, as the left-divisibility order on a Garside group determines the directed Cayley graph. Therefore, a Garside group of $\IG$-type is one whose left-divisibility order is isomorphic to the left-divisibility order of $G$.

The question of Dehornoy et al. is motivated by the work of Gateva-Ivanova and Van den Bergh \cite{GIVdB_IType} and Jespers and Okniński \cite{JO_IType} on groups of \emph{$I$-type} which are, according to the definition of Dehornoy et al., the groups of $\mathrm{I}_{\Z^n}$ type, where $\Z^n$ is considered as a Garside group with Garside monoid $(\Z^n)^+ = \Z_{\geq 0}^n$. They have shown that each group of $I$-type is a regular subgroup of the group $\Z^n \rtimes \Sym_n$ acting affinely on $\Z^n$, where the symmetric group $\Sym_n$ acts by permutations of coordinates on $\Z^n$. Furthermore, due to the mentioned work of Gateva-Ivanova and Van den Bergh, and Jespers and Okniński, together with work of Chouraqui \cite{Chouraqui_Garside}, it is now known that Garside groups of $I$-type coincide with structure groups of finite involutive, nondegenerate set-theoretic solutions to the Yang-Baxter equation.

In order to motivate the investigation of \emph{Yang-Baxter like} structures for braid groups, Dehornoy et al. ask for a characterization of groups of $\mathrm{I}_{B_n}$-type where $B_n$ is the braid group on $n$ strands, together with its usual Garside structure (see \cref{sec:prelim}). In this article, we aim to give a full solution to this problem.

In \cref{sec:order_automorphisms}, we define the notion of $\IG$-type in the more general case when $G$ is a \emph{left-ordered group}, that is, a group with a left-invariant partial order, and define a group of \emph{$\IG$-type} as a left-ordered group $H$ with an order-isomorphism $\iota: (H,\leq) \overset{\sim}{\to} (G,\leq)$. These data $(H,\iota)$ will be conceptualized by the notion of an \emph{$\IG$-formation}. We will prove that if $(G,\leq)$ is a lattice with finitely many atoms that satisfies certain \emph{rigidity} conditions, the automorphism group of the ordered set $(G,\leq)$ contains $L_G$, the group of left translations of $G$, as a finite index subgroup. As a consequence, these groups admit only finitely many $\IG$-formations, up to equivalence.

We will proceed in \cref{sec:IG_for_AT} with a description of $\IGam$-formations whenever $\Gamma$ is a spherical Artin-Tits group. We will show that the automorphism group of the ordered set $(\Gamma,\leq)$ is a semidirect product of $L_{\Gamma}$ and $\mathfrak{D}_{\Gamma}$, the group of \emph{diagram automorphisms}, which is an analogue of a result of Björner \cite[Theorem 3.2.5]{bjorner_combinatorics} on automorphisms of weak order. As a consequence, we can prove that $\IGam$-formations for an Artin-Tits group $\Gamma$ are equivalent to certain \emph{skew brace} structures on $\Gamma$ \cite{Guarnieri_Vendramin} and use this result to classify all non-trivial $\IGam$-formations whenever $\Gamma$ is an \emph{irreducible} spherical Artin-Tits group.

In particular, we will answer the question of Dehornoy et al. by showing that the braid group $B_n$ ($n \geq 3$) admits exactly one non-trivial $\mathrm{I}_{B_n}$-formation.

Note that our notion of $\IG$-type is only remotely related to the \emph{monoids of $IG$-type} introduced by Goffa and Jespers \cite{goffa_jespers}, that are brace-like structures on abelian monoids.

\section{Preliminaries} \label{sec:prelim}

\subsection{Left-ordered groups}

Recall that a \emph{lattice} is a partially ordered set $L = (L,\leq)$ such that for any $x,y \in L$, the binary \emph{join}
\[
x \vee y = \min \{ z \in L \ : \ (z \geq x) \ \& \ (z \geq y) \}
\]
and dually, the binary \emph{meet}
\[
x \wedge y = \max \{ z \in L \ : \ (z \leq x) \ \& \ (z \leq y) \}
\]
exist. A \emph{left-ordered group} is a pair $G = (G,\leq)$ where $G$ is a group and $\leq$ is a partial order that is \emph{left-invariant} in the sense that $y \leq z$ implies $xy \leq xz$ for all $x,y,z \in G$. If a left-ordered group $G$ is a lattice under its partial order, one says that $G$ is a \emph{left $\ell$-group}.

In order to reference it later, we recall the following elementary fact from \cite{Rump-Geometric-Garside}:

\begin{pro} \label{pro:left_l_groups_are_torsion_free}
    If $G$ is a left $\ell$-group, then $G$ is torsion-free.
\end{pro}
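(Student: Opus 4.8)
The plan is to argue by contradiction, exploiting the fact that left translations are order-automorphisms of the underlying lattice. Suppose that $G$ contains a torsion element $g \neq e$, and set $n = \ord(g) \geq 2$. Since $(G,\leq)$ is a lattice, all finite joins exist, so I may form the element
\[
u = \bigvee_{i=0}^{n-1} g^i,
\]
the join of the (finite) cyclic subgroup $\gen{g}$.

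The key observation is that for each $x \in G$ the left translation $L_x : y \mapsto xy$ is an order-automorphism of $(G,\leq)$: it is a bijection, it preserves the order by left-invariance, and its inverse $L_{x^{-1}}$ is order-preserving for the same reason. Since an order-automorphism preserves every join that exists, applying $L_g$ to $u$ yields
\[
g u = \bigvee_{i=0}^{n-1} g^{i+1} = \bigvee_{i=1}^{n} g^i = u,
\]
where the last equality holds because $g^n = e = g^0$, so that $\{g^1, \dots, g^n\}$ and $\{g^0, \dots, g^{n-1}\}$ index the same subset of $G$; in other words, left multiplication by $g$ merely permutes the elements of $\gen{g}$. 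Cancelling $u$ on the right gives $g = e$, contradicting $n \geq 2$.

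I expect the only point requiring care to be the verification that $L_g$ commutes with the join, i.e. that an order-automorphism preserves finite joins. This is a standard fact, following immediately from the characterization of $x \vee y$ as a least upper bound together with the observation that an order-isomorphism carries least upper bounds to least upper bounds. The existence of $u$ is guaranteed directly by the lattice axiom (binary joins exist, hence so do all finite joins), and no further structure — such as right-invariance of the order or the presence of a Garside element — is needed. Thus the argument is purely order-theoretic, and torsion enters only through the cyclic reindexing $g \cdot \gen{g} = \gen{g}$.
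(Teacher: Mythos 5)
Your argument is correct and coincides with the paper's own proof: both form $u = \bigvee_{k=0}^{n-1} g^k$, observe that left translation by $g$ (an order-automorphism by left-invariance) permutes the powers of $g$ and hence fixes $u$, and then cancel $u$ to conclude $g = e$. Your extra verification that order-automorphisms preserve existing joins is a sound elaboration of a step the paper leaves implicit, not a different route.
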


\begin{proof}
    If $g \in G$ and $n > 0$ are such that $g^n = e$, then
    \[
    g \cdot \left( \bigvee_{k=0}^{n-1} g^k \right) = \bigvee_{k=0}^{n-1} g^{k+1} = \bigvee_{k=0}^{n-1} g^k,
    \]
    so $g = e$.
\end{proof}

Two distinguished subsets of any left-ordered group $G$ are its \emph{positive cone} $G^+ = \{g \in G: g \geq e \}$ and its \emph{negative cone} $G^- = \{g \in G: g \leq e \}$. Note that $G^+$ and $G^-$ determine each other, that is, $G^- = (G^+)^{-1}$. Furthermore, the order of any left-ordered group is determined by its positive or negative cone, as
\[
y^{-1}x \in G^- \Leftrightarrow x \leq y \Leftrightarrow x^{-1}y \in G^+.
\]
A left-ordered group $G$ is called \emph{noetherian} whenever each ascending chain $x_1 \leq x_2 \leq \ldots$ in $G^-$ becomes stationary and each descending chain $y_1 \geq y_2 \geq \ldots$ in $G^+$ becomes stationary. By left-invariance, this implies that each ascending chain that is bounded from above, becomes stationary, and each descending chain that is bounded from below, becomes stationary.

Furthermore, we denote by $G^{\mathrm{op}}$ the left-ordered group that is obtained by turning over the order of $G$, that is $g \leq_{\mathrm{op}} h \Leftrightarrow h \leq g$. It is readily checked that this is indeed a left-ordered group with $(G^{\mathrm{op}})^+ = G^-$, and that $G^{\mathrm{op}}$ is a left $\ell$-group if and only if $G$ is a left $\ell$-group.

If $P$ is a partially ordered set and $x,y \in P$ we write $x \prec y$ if $x < y$ and there is no $z \in P$ such that $x < z < y$. Given $x,y \in P$ with $x \leq y$, we call a finite sequence $(x_i)_{0 \leq i \leq l}$ for some $l \in \Z_{\geq 0}$ a \emph{successor chain} of \emph{length} $l$ from $x$ to $y$ when $x = x_0 \prec x_1 \prec \ldots \prec x_l = y$. Given two elements $x,y \in P$ with $x \leq y$, we define their \emph{relative height} $H(x,y)$ as the minimal $l$ such that there is a successor chain $(x_i)_{0 \leq i \leq l}$ from $x$ to $y$, if such a chain exists. If no successor chain exists, we define $H(x,y) = +\infty$.

\begin{pro} \label{pro:successor_chains_exist}
    If $G$ is a noetherian left-ordered group, then for any $x,y \in G$ with $x \leq y$, we have $H(x,y) < +\infty$.
\end{pro}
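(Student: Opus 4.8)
The plan is to reduce to the case $x=e$ and then build a successor chain from the bottom by a greedy procedure, using the two halves of the noetherian hypothesis for two different purposes. First I would observe that for any fixed $x$, the left translation $L_x : g \mapsto xg$ is an automorphism of the ordered set $(G,\le)$: it is a bijection, and both $L_x$ and its inverse $L_{x^{-1}}$ are order preserving by left-invariance. In particular $L_x$ preserves the covering relation $\prec$ and carries successor chains to successor chains, so $H(x,y) = H(e, x^{-1}y)$ for all $x \le y$. Since $x \le y$ forces $x^{-1}y \in G^+$, it suffices to prove that $H(e,g) < +\infty$ for every $g \in G^+$.

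The key sub-step is the existence of atoms below a given positive element: for every $h \in G^+ \setminus \{e\}$ there is an $a$ with $e \prec a \le h$. I would establish this using the descending chain condition on $G^+$. If $h$ does not already cover $e$, choose $z_1$ with $e < z_1 < h$; if $z_1$ does not cover $e$, choose $z_2$ with $e < z_2 < z_1$, and continue. Every element so produced lies in $G^+$ and is $\le h$, so this yields a descending chain $h > z_1 > z_2 > \cdots$ in $G^+$, which must be stationary by noetherianity. Hence the procedure halts, necessarily at an element $a$ admitting no $w$ with $e < w < a$, that is, with $e \prec a$, and with $a \le h$ by construction.

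With this in hand I would build the chain greedily. Starting from $x_0 = e$, suppose $x_i \le g$ has been constructed with $x_i \ne g$. Then $h := x_i^{-1} g \in G^+ \setminus \{e\}$, so the sub-step provides an atom $a$ with $e \prec a \le h$. Set $x_{i+1} := x_i a$. Applying the automorphism $L_{x_i}$ to $e \prec a \le h$ gives $x_i \prec x_i a \le x_i h = g$, so $x_{i+1}$ genuinely extends the chain and still satisfies $x_{i+1} \le g$. The elements $x_0 \prec x_1 \prec \cdots$ then form a strictly ascending chain bounded above by $g$, so by the consequence of the noetherian hypothesis recorded in the preliminaries (every ascending chain bounded from above is stationary) it cannot be infinite. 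Therefore the construction terminates, and it can only terminate when $x_i = g$; the resulting sequence is a successor chain from $e$ to $g$, whence $H(e,g) < +\infty$.

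The main point, and the only place where left-invariance must be handled with care, is the choice of termination argument. One is tempted to induct by replacing $g$ with $a^{-1}g$ and claiming $a^{-1}g < g$, but in a merely left-ordered group right translation need not preserve the order, so this inequality can fail. The argument above sidesteps this by never comparing $a^{-1}g$ with $g$: it uses only that the ascending chain of partial products remains bounded above by $g$, invoking the ascending-chain half of noetherianity for termination and the descending-chain half solely to guarantee that each individual step proceeds along a covering relation.
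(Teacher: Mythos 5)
Your proof is correct and follows essentially the same route as the paper: a greedy upward construction of the successor chain, using the descending-chain half of noetherianity to produce each covering step and the ascending-chain half (chains bounded above are stationary) for termination. The only cosmetic difference is that you first reduce to $x=e$ via the order-automorphism $L_{x^{-1}}$ and phrase the covering step as the existence of atoms below positive elements, whereas the paper runs the identical argument directly inside the interval $[x,y]$.
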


\begin{proof}
    We have to show the existence of a successor chain from $x$ to $y$. By noetherianity, there cannot be an infinite chain $y = y_0 > y_1 > \ldots$ where $y_i > x$ for all $i \geq 0$ which shows the existence of an $x_1 \in G$ with $y \geq x_1 \succ x_0 = x$. Therefore, given a successor chain $x = x_0 \prec x_1 \prec \ldots \prec x_i$ $(i \geq 0)$, where $x_i < y$, one can find $x_{i+1}$ with $x_i \prec x_{i+1} \leq y$. This process terminates when $x_i = y$, which is guaranteed by noetherianity. 
\end{proof}

Given a noetherian left-ordered group, we define the (\emph{absolute}) \emph{height} of $g \in G^+$ as the quantity $H(g) = H(e,g)$. By \cref{pro:successor_chains_exist}, $H(g)$ is always finite. 

In a left-ordered group $G$, we call an element $x \in G$ an \emph{atom} if $g \succ e$, and a \emph{dual atom} if $x \prec e$. We write $\At(G)$ for the set of atoms and $\Atd(G)$ for the set of dual atoms in $G$.

\begin{pro} \label{pro:height_is_length_of_minimal_factorization}
    If $G$ is a noetherian left-ordered group and $g \in G^+$, then $H = H(g)$ is the minimal integer $H \geq 0$ such that there is a factorization $g = x_1x_2 \ldots x_H$ with $x_i \in \At(G)$ ($1 \leq i \leq H$).
\end{pro}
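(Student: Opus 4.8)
The plan is to reduce the statement to a length-preserving bijection between successor chains from $e$ to $g$ and factorizations of $g$ into atoms. The bridge is the observation that the covering relation $\prec$ is governed by left translation: since $y \leq z \Leftrightarrow y^{-1}z \in G^+$ and left translation by any group element is an order-automorphism of $(G,\leq)$ (because $L_x$ and $L_{x^{-1}}$ are mutually inverse order-preserving maps, by left-invariance), the map $L_{x^{-1}}\colon u \mapsto x^{-1}u$ both preserves and reflects $\prec$. Hence for $x,y \in G$ one has $x \prec y$ if and only if $e \prec x^{-1}y$, that is, if and only if $x^{-1}y \in \At(G)$. I would record this covering characterization first, as it is the only genuine content of the argument.

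Granting it, I would set up the correspondence as follows. Given a successor chain $e = x_0 \prec x_1 \prec \cdots \prec x_H = g$, put $a_i = x_{i-1}^{-1}x_i$; each $a_i$ is an atom by the covering characterization, and telescoping gives $g = x_H = a_1 a_2 \cdots a_H$, a factorization of $g$ into $H$ atoms. Conversely, given a factorization $g = a_1 a_2 \cdots a_H$ with every $a_i \in \At(G)$, the partial products $x_i = a_1 \cdots a_i$ (with $x_0 = e$) satisfy $x_{i-1}^{-1}x_i = a_i \in \At(G)$, hence $x_{i-1} \prec x_i$, so $(x_i)_{0 \leq i \leq H}$ is a successor chain from $e$ to $g$ of length $H$. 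These two constructions are mutually inverse, so they biject successor chains from $e$ to $g$ with atom-factorizations of $g$ and, in particular, preserve length.

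With the bijection in hand, the minimal length of a successor chain from $e$ to $g$ — which is $H(e,g) = H(g)$ by definition — coincides with the minimal number of atoms in a factorization of $g$. Existence of at least one such chain, and hence of at least one such factorization, is guaranteed by \cref{pro:successor_chains_exist}, using that $G$ is noetherian; so both minima are attained and the claim follows.

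I do not anticipate a serious obstacle: once the covering characterization $x \prec y \Leftrightarrow x^{-1}y \in \At(G)$ is in place, the remainder is bookkeeping via telescoping products. The one point requiring genuine care is confirming that left translation \emph{reflects} the order as well as preserving it, so that no intermediate element is lost or created under $L_{x^{-1}}$ — this is immediate from left-invariance but is exactly what makes the covering relation translation-equivariant, and hence what makes the bijection length-exact rather than merely length-nonincreasing.
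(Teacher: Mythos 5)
Your proof is correct and takes essentially the same route as the paper's: both establish the length-preserving correspondence between successor chains from $e$ to $g$ and atom factorizations via the telescoping assignments $a_i = x_{i-1}^{-1}x_i$ and $x_i = a_1\cdots a_i$. The only difference is that you make explicit what the paper leaves implicit, namely the translation-equivariance of the covering relation ($x \prec y \Leftrightarrow x^{-1}y \in \At(G)$) and the appeal to \cref{pro:successor_chains_exist} for attainment of the minimum.
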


\begin{proof}
    Given a successor chain $(g_i)_{1 \leq i \leq H}$ from $e$ to $g$, the elements $x_i = g_{i-1}^{-1}g_i \in \At(G)$ ($1 \leq i \leq H$) constitute a factorization of $g$, that is, $g = x_1 x_2 \ldots x_H$, which has $H$ factors. On the other hand, given a factorization $g = x_1 x_2 \ldots x_H$ with $x_i \in \At(G)$ ($1 \leq i \leq H$), there is a successor chain $(g_i)_{0 \leq i \leq H}$ of length $H$ from $e$ to $g$ whose elements are given by $g_0 = e$ and $g_i = g_{i-1}x_i$ ($1 \leq i \leq H$).
    \end{proof}

\begin{pro} \label{pro:noetherian_l_groups_generated_by_atoms}
    If $G$ is a noetherian left $\ell$-group, then $G^+ = \gen{\At(G)}_{\mathrm{mon}}$ and $G = \gen{\At(G)}_{\mathrm{gr}}$.
\end{pro}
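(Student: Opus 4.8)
The plan is to reduce both assertions to the factorization statement of \cref{pro:height_is_length_of_minimal_factorization}, establishing the monoid equality first and then bootstrapping it to the group equality, where the lattice hypothesis will finally be used.

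For the monoid equality, I would first record that $\gen{\At(G)}_{\mathrm{mon}} \subseteq G^+$. Each atom $x$ satisfies $x \succ e$, hence $x > e$ and in particular $x \in G^+$; moreover $G^+$ is a submonoid, since left-invariance gives $ab \geq ae = a \geq e$ whenever $a,b \in G^+$. For the reverse inclusion I would take an arbitrary $g \in G^+$. Because $e \leq g$ and $G$ is noetherian, \cref{pro:successor_chains_exist} guarantees $H(g) = H(e,g) < +\infty$, and then \cref{pro:height_is_length_of_minimal_factorization} supplies a factorization $g = x_1 x_2 \cdots x_{H(g)}$ with each $x_i \in \At(G)$. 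This places $g$ in $\gen{\At(G)}_{\mathrm{mon}}$, so that $G^+ = \gen{\At(G)}_{\mathrm{mon}}$.

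The group equality is where the lattice assumption enters, and for it I would prove the sharper statement $G = G^+ (G^+)^{-1}$. Given $g \in G$, set $p = g \vee e$, which exists since $G$ is a left $\ell$-group and which lies in $G^+$ by construction. Multiplying $p \geq g$ on the left by $g^{-1}$ and invoking left-invariance yields $g^{-1}p \geq e$, so $g^{-1}p \in G^+$ as well, while a direct computation gives $g = p (g^{-1}p)^{-1}$. Since both $p$ and $g^{-1}p$ belong to $G^+ = \gen{\At(G)}_{\mathrm{mon}} \subseteq \gen{\At(G)}_{\mathrm{gr}}$, it follows that $g \in \gen{\At(G)}_{\mathrm{gr}}$; the opposite inclusion is immediate, whence $G = \gen{\At(G)}_{\mathrm{gr}}$.

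The only genuinely non-formal point is this last step: the monoid equality rests on noetherianity alone, but expressing an arbitrary group element as a fraction $p(g^{-1}p)^{-1}$ of positive elements requires the join $g \vee e$ to exist. Thus the lattice hypothesis is indispensable precisely here and plays no role in the monoid statement.
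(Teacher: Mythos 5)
Your proposal is correct and takes essentially the same route as the paper: the monoid equality via \cref{pro:successor_chains_exist} and \cref{pro:height_is_length_of_minimal_factorization}, and the group equality by expressing an arbitrary $g$ as a fraction of the positive elements $g \vee e$ and $g^{-1}(g \vee e)$. In fact your right-fraction $g = p\,(g^{-1}p)^{-1}$ is the correct form of the paper's decomposition, whose displayed formula $g = g_1^{-1}g_2$ with $g_1 = g^{-1}(g \vee e)$, $g_2 = g \vee e$ contains a small slip (as written it equals the conjugate $(g \vee e)^{-1}g(g \vee e)$ rather than $g$; it should read $g = g_2 g_1^{-1}$), so your version is, if anything, the more careful one.
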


\begin{proof}
    By \cref{pro:height_is_length_of_minimal_factorization}, $G^+ \subseteq \gen{\At(G)}_{\mathrm{mon}} \subseteq G^+$. For arbitrary $g \in G$, we can write $g = g_1^{-1}g_2$ where $g_1 = g^{-1}(g \vee e) \in G^+$ and $g_2 = g \vee e \in G^+$, so $G = \gen{G^+}_{\mathrm{gr}} \subseteq \gen{\At(G)}_{\mathrm{gr}} \subseteq G$.
\end{proof}

\subsection{Spherical Artin-Tits groups}

Here, we recapitulate part of the theory of Artin-Tits groups.

Recall that a \emph{Coxeter matrix} is given by a mapping $m: S \times S \to \Z_{\geq 1}$; $(i,j) \mapsto m_{ij}$ on some set $S$, such that $m_{ij} = m_{ji}$ for all $i,j \in S$ and $m_{ij} = 1$ if and only if $i = j$.

Given a monoid $M$, an integer $k \geq 1$ and elements $x,y \in M$, we denote the corresponding \emph{braid term} by
\begin{equation*} \label{def:braid_term}
    r_k(x,y) = \begin{cases}
    (xy)^l & k = 2l, \\
    (xy)^lx & k = 2l+1,
\end{cases}
\end{equation*}

Given a Coxeter matrix $m: S \times S \to \Z_{\geq 0}$, the corresponding \emph{Artin-Tits monoid} is defined by generators and relations as
\[
\Gamma^+_{m} = \genrel{\sigma_i, \ i \in S}{r_{m_{ij}}(\sigma_i,\sigma_j) = r_{m_{ij}}(\sigma_j,\sigma_i), \ i,j \in S}_{\mathrm{mon}}.
\]
Similarly, the \emph{Artin-Tits group} $\Gamma_m$ is the group defined by the same generators and relations.

Furthermore, recall that the \emph{Coxeter group} associated to a Coxeter matrix $m$ is defined by generators and relations as
\[
G_m = \genrel{\sigma_i, \ i \in S}{r_{m_{ij}}(\sigma_i,\sigma_j) = r_{m_{ij}}(\sigma_j,\sigma_i), \ i,j \in S; \ \sigma_i^2 = 1, \ i \in S}_{\mathrm{gr}}.
\]
An Artin-Tits monoid $\Gamma_m^+$ resp. Artin-Tits group $\Gamma_m$ is called \emph{spherical} whenever $G_m$ is a finite group.

In the following, we will typically drop the subscript-$m$ for Artin-Tits monoids $\Gamma_m^+$ resp. -groups $\Gamma_m$ if the Coxeter matrix is clear from the context.

Recall the representation of Coxeter matrices by Coxeter graphs: given a Coxeter matrix $m$, the corresponding Coxeter graph is the labelled, undirected graph on the set $S$ where $i,j \in S$ are connected by an edge if and only if $m_{ij} \geq 3$. Furthermore, the edge $\{i,j \}$ is labelled by the quantity $m_{ij}$, which is dropped if $m_{ij} = 3$.

Let $m: S \times S \to \Z_{\geq 1}$ be a Coxeter matrix for the Artin-Tits monoid $\Gamma^+$ resp. -group $\Gamma$, and let $\phi \in \Sym_S$ be a permutation such that $m_{\phi(i)\phi(j)} = m_{ij}$ for all $i,j \in S$, then there is a unique automorphism $\delta_{\phi}$ of $\Gamma^+$ resp. $\Gamma$ such that $\delta_{\phi}(\sigma_i) = \sigma_{\phi(i)}$. An automorphism of the form $\delta_{\phi}$ is called a \emph{diagram automorphism}, and we denote by $\mathfrak{D}_{\Gamma^+}$ and $\mathfrak{D}_{\Gamma}$ the respective groups of diagram automorphisms.

Recall that a Artin-Tits group resp. Coxeter group is called \emph{irreducible} if and only if its Coxeter graph is connected. More precisely, this means that for any $x,y \in S$, there is a sequence $(x_i)_{1 \leq i \leq k}$ with $x_i \in S$ for some integer $k \geq 0$, such that $x = x_0$, $y = x_k$ and $m_{x_{i},x_{i+1}} \neq 2$ for $0 \leq i < k$.

The Coxeter matrices resp. graphs for spherical Artin-Tits groups are known. In the following, we only list, for future reference, the Coxeter graphs of irreducible spherical Artin-Tits groups where $\mathfrak{D}_{\Gamma}$ is non-trivial:
\begin{center} \label{tab:dynkin}
    \begin{tabular}{c c}
\begin{tikzpicture}
    \node (1) at (0,0) [draw, shape=circle, fill=black,inner sep=2pt] {} ;
    \node (2) at (1,0) [draw, shape=circle, fill=black,inner sep=2pt] {} ;
    \node (3) at (1.5,0) {} ;
    \node (4) at (2.5,0) {} ;
    \node (5) at (3,0) [draw, shape=circle, fill=black,inner sep=2pt] {} ;
    \node (6) at (4,0) [draw, shape=circle, fill=black,inner sep=2pt] {} ;
    \draw (1) -- (2) -- (3) {} ;
    \draw (4) -- (5) -- (6) {} ;
    \draw (3) -- (4) [dotted] {} ;
    \node at (1) [anchor=north] {\tiny{$1$}} ;
    \node at (2) [anchor=north] {\tiny{$2$}} ;
    \node at (5) [anchor=north] {\tiny{$n-1$}} ;
    \node at (6) [anchor=north] {\tiny{$n$}} ;
    \node at (3) [anchor=south,color = white] {$1$} ;
    \end{tikzpicture} &  \begin{tikzpicture}
    \node (1) at (0,0) [draw, shape=circle, fill=black,inner sep=2pt] {} ;
    \node (2) at (1,1) [draw, shape=circle, fill=black,inner sep=2pt] {} ;
    \node (n) at (1,0) [draw, shape=circle, fill=black,inner sep=2pt] {} ;
    \node (3) at (2,0) [draw, shape=circle, fill=black,inner sep=2pt] {} ;
    \node (4) at (2.5,0) {} ;
    \node (5) at (3.5,0) {} ;
    \node (n-1) at (4,0) [draw, shape=circle, fill=black,inner sep=2pt] {} ; ;
    \node at (1) [anchor = north] {\tiny{$1$}} ;
    \node at (2) [anchor = south] {\tiny{$2$}} ;
    \node at (3) [anchor = north] {\tiny{$3$}} ;
    \node at (n) [anchor = north] {\tiny{$n$}} ;
    \node at (n-1) [anchor = north] {\tiny{$n-1$}} ;
    \draw (1) -- (n) -- (3) ;
    \draw (n) -- (2) ;
    \draw (3) -- (4) ;
    \draw (4) -- (5) [dotted] {} ;
    \draw (5) -- (n-1) ;
    \end{tikzpicture}  \\
    $A_n$ ($n \geq 2$) & $D_n$ ($n \geq 4$) \\
\end{tabular}
\end{center}

\begin{center}
\begin{tabular}{c c c}
    \begin{tikzpicture}
    \node (1) at (0,0) [draw, shape=circle, fill=black,inner sep=2pt] {} ;
    \node (2) at (1,0) [draw, shape=circle, fill=black,inner sep=2pt] {} ;
    \node (3) at (2,0) [draw, shape=circle, fill=black,inner sep=2pt] {} ;
    \node (4) at (3,0) [draw, shape=circle, fill=black,inner sep=2pt] {} ;
    \node (5) at (4,0) [draw, shape=circle, fill=black,inner sep=2pt] {} ;
    \node (6) at (2,1) [draw, shape=circle, fill=black,inner sep=2pt] {} ;
    \draw (1) -- (2) -- (3) -- (4) -- (5) ;
    \draw (3) -- (6) ;
    \node at (1) [anchor=north] {\tiny{$1$}} ;
    \node at (2) [anchor=north] {\tiny{$2$}} ;
    \node at (3) [anchor=north] {\tiny{$3$}} ;
    \node at (4) [anchor=north] {\tiny{$4$}} ;
    \node at (5) [anchor=north] {\tiny{$5$}} ;
    \node at (6) [anchor=south] {\tiny{$6$}} ;
    \end{tikzpicture} & \begin{tikzpicture}
    \node (1) at (0,0) [draw, shape=circle, fill=black,inner sep=2pt] {} ;
    \node (2) at (1,0) [draw, shape=circle, fill=black,inner sep=2pt] {} ;
    \node (3) at (2,0) [draw, shape=circle, fill=black,inner sep=2pt] {} ;
    \node (4) at (3,0) [draw, shape=circle, fill=black,inner sep=2pt] {} ;
    \draw (1) -- (2) -- node [above] {$4$} (3) -- (4);
    \node at (1) [anchor=north] {\tiny{$1$}} ;
    \node at (2) [anchor=north] {\tiny{$2$}} ;
    \node at (3) [anchor=north] {\tiny{$3$}} ;
    \node at (4) [anchor=north] {\tiny{$4$}} ;
    \end{tikzpicture} & \begin{tikzpicture}
    \node (1) at (0,0) [draw, shape=circle, fill=black,inner sep=2pt] {} ;
    \node (2) at (1,0) [draw, shape=circle, fill=black,inner sep=2pt] {} ;
    \draw (1) -- node [above] {$n$} (2) ;
    \node at (1) [anchor=north] {\tiny{$1$}} ;
    \node at (2) [anchor=north] {\tiny{$2$}} ;
    \end{tikzpicture} \\
    $E_6$ & $F_4$ & $I_n$ ($n \geq 4$) \\
\end{tabular}
\end{center}

Note that we decided to put $G_2 = I_6$ and $H_2 = I_5$ here. Furthermore, we want to remark here that the generators of the Artin-Tits groups will in future calculations be numbered according to the labels of the vertices in the listed Coxeter graphs.

For spherical Artin-Tits groups, we have the following fundamental result by Brieskorn and Saito:

\begin{thm} \label{thm:brieskorn_saito}
    Let $\Gamma$ be a spherical Artin-Tits group. Then the canonical monoid homomorphism $\varepsilon: \Gamma^+ \to \Gamma$ identifies $\Gamma^+$ with the positive cone of a left-invariant, noetherian lattice order on $\Gamma$.
\end{thm}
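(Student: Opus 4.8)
The plan is to first establish the monoid-theoretic structure of $\Gamma^+$ --- cancellativity, atomicity with a well-founded divisibility order, and the existence of least common multiples and greatest common divisors --- and then to realise $\Gamma$ as the group of fractions of $\Gamma^+$ via the Ore condition, transporting the divisibility lattice on $\Gamma^+$ to a left-invariant lattice order on $\Gamma$ whose positive cone is $\varepsilon(\Gamma^+)$.

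On the monoid side, I would begin with the observation that the defining relations $r_{m_{ij}}(\sigma_i,\sigma_j) = r_{m_{ij}}(\sigma_j,\sigma_i)$ are \emph{homogeneous}: both sides have the same number of letters. Hence word length descends to a monoid homomorphism $\lambda: \Gamma^+ \to (\Z_{\geq 0},+)$ with $\lambda(g) = 0 \Leftrightarrow g = e$, which shows at once that $\Gamma^+$ has no nontrivial units, that divisibility is well-founded (giving noetherianity), and that $\Gamma^+$ is generated by the atoms $\At(\Gamma^+) = \{\sigma_i : i \in S\}$. The first serious input is the left and right cancellativity of $\Gamma^+$; this is the combinatorial heart of the Brieskorn--Saito analysis, and I would obtain it from a careful study of the rewriting system attached to the Artin relations (equivalently, via Dehornoy's word-reversing and the \emph{cube condition}). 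The second, and decisive, input is \emph{sphericity}: because $W = G_m$ is finite it possesses a longest element $w_0$, whose positive lift $\Delta \in \Gamma^+$ is a common right (and left) multiple of all the atoms. Combined with cancellativity and noetherianity, the existence of common multiples upgrades to the existence of a \emph{least} common right multiple and a greatest common left divisor for any pair of elements, so that $(\Gamma^+, \preceq)$, ordered by left-divisibility, is a lattice.

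On the group side, cancellativity together with the common-multiple property means that $\Gamma^+$ satisfies the (left and right) Ore condition, so it embeds into its group of fractions; since that group is presented by exactly the same generators and relations, it is $\Gamma$, and the canonical map $\varepsilon$ is injective. Identifying $\Gamma^+$ with its image $P = \varepsilon(\Gamma^+)$, I would define $g \leq h \Leftrightarrow g^{-1}h \in P$. Left-invariance is immediate from the definition, reflexivity and transitivity follow since $P$ is a submonoid, and antisymmetry follows from $P \cap P^{-1} = \{e\}$, i.e. from the triviality of units established via $\lambda$; thus $\leq$ is a left-invariant partial order with positive cone exactly $P$. Noetherianity transfers from $\Gamma^+$: by left-invariance every bounded monotone chain can be translated into $\Gamma^+$, where $\lambda$ forbids infinite strict chains. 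Finally, to see that $(\Gamma,\leq)$ is a lattice I would reduce, again by left-invariance, to producing $e \vee z$ and $e \wedge z$ for an arbitrary $z \in \Gamma$; writing $z$ as a fraction of positive elements and computing with the least common multiple and greatest common divisor furnished by the monoid lattice yields these joins and meets explicitly, whence $x \vee y = x\,(e \vee x^{-1}y)$ and $x \wedge y = x\,(e \wedge x^{-1}y)$ for all $x,y$.

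The main obstacle is squarely the monoid step, and within it the two facts that genuinely use the Artin structure: cancellativity and the existence of common multiples. The former is a delicate confluence argument for which there is no shortcut, and the latter is exactly where sphericity is indispensable --- for a non-spherical Coxeter matrix $\Delta$ need not exist and $\Gamma^+$ need not be a lattice. By contrast, the passage from the monoid lattice to the left-invariant lattice order on $\Gamma$ is routine Ore localization, so in practice I would either cite Brieskorn--Saito for the two hard monoid facts or reconstruct them through the word-reversing machinery, and then devote the bulk of the argument to the clean transport of structure to $\Gamma$.
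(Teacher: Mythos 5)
Your overall architecture is the standard Brieskorn--Saito route, which is also what the paper does (its ``proof'' is just the citation to \cite[Proposition 5.5, Satz 5.6]{ABS_Coxeter}): length homomorphism from homogeneity, cancellativity as a hard combinatorial input, sphericity giving the lift $\Delta$ of $w_0$ as a common multiple, Ore localization, and the routine transport of the divisibility lattice to a left-invariant order on $\Gamma$. The transport part of your argument is correct as written (in particular the reduction of $x \vee y$ and $x \wedge y$ to $e \vee z$ and $e \wedge z$, antisymmetry via $\lambda$, and the noetherianity transfer all go through). But there is one step that is genuinely false as stated: the claim that ``combined with cancellativity and noetherianity, the existence of common multiples upgrades to the existence of a least common right multiple and a greatest common left divisor.'' No such abstract upgrade exists. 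Consider $M = \genrel{a,b}{ab = ba, \ a^2 = b^2}_{\mathrm{mon}}$: it is cancellative (normal forms $a^k b^{\varepsilon}$ with $\varepsilon \in \{0,1\}$), noetherian via the length function, and commutative, so any two elements $x,y$ have the common multiple $xy$; yet $a$ and $b$ admit \emph{two} distinct minimal common multiples, namely $ab$ and $a^2$, neither dividing the other, so $a \vee b$ does not exist and divisibility is not a lattice. (As a sanity check against the paper's own \cref{pro:left_l_groups_are_torsion_free}: the group of fractions of this $M$ is $\Z \oplus \Z/2\Z$, which has torsion, so it could never carry a lattice order.) Noetherianity only guarantees that minimal common multiples exist; their \emph{uniqueness} is exactly the lattice property, and it is a third hard fact that genuinely uses the Artin relations, on the same footing as cancellativity.

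The repair is easy and stays within the machinery you already invoke, but it changes your accounting of what must be cited or proved. Either cite Brieskorn--Saito for the lattice property itself --- that is precisely part of the content of the results the paper cites, their existence of gcds and lcms being established by a delicate induction on the defining relations, not by abstract nonsense --- or, if you reconstruct the monoid facts via word reversing, note that \emph{completeness} of reversing for the Artin presentation (the cube condition, verified on triples of atoms) delivers cancellativity and the existence of least common multiples \emph{simultaneously}; in that approach the lcm is produced constructively and its leastness is part of the completeness statement. Either way, delete the claimed formal upgrade and promote ``uniqueness of minimal common multiples'' to the list of inputs requiring the Artin structure; with that amendment your proof is correct and matches the argument the paper delegates to \cite{ABS_Coxeter}.
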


\begin{proof}
    See \cite[Proposition 5.5, Satz 5.6]{ABS_Coxeter}.
\end{proof}

When talking about a spherical Artin-Tits group as a left $\ell$-group, we will always mean the lattice order defined by the positive cone $\Gamma^+$. Furthermore, by \cref{thm:brieskorn_saito}, we can from now on identify $\Gamma^+$ with the submonoid of $\Gamma$ generated by $\sigma_i$ ($i \in S$).

For all $\delta \in \mathfrak{D}(\Gamma)$, we also have $\delta(\Gamma^+) = \Gamma^+$, therefore we obtain:

\begin{pro} \label{pro:diagram_automorphisms_are_order_automorphisms}
    Let $\Gamma$ be a spherical Artin-Tits group, then $\mathfrak{D}_{\Gamma}$ is a group of automomorphisms of the ordered set $(\Gamma,\leq)$.
\end{pro}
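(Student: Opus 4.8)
The plan is to exploit the fact, recorded in the remark immediately preceding the statement, that every diagram automorphism preserves the positive cone $\Gamma^+$, together with the observation from the preliminaries that a left-invariant order is completely determined by its positive cone via $x \leq y \Leftrightarrow x^{-1}y \in \Gamma^+$. Once these two ingredients are in place, the proposition is essentially immediate.

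First I would recall that, by definition, each $\delta \in \mathfrak{D}_{\Gamma}$ is a group automorphism of $\Gamma$ of the form $\delta = \delta_{\phi}$ with $\delta_{\phi}(\sigma_i) = \sigma_{\phi(i)}$ for a Coxeter-matrix-preserving permutation $\phi \in \Sym_S$. Since, after the identification of \cref{thm:brieskorn_saito}, we have $\Gamma^+ = \gen{\sigma_i : i \in S}_{\mathrm{mon}}$ and $\phi$ is a bijection of $S$, the automorphism $\delta_{\phi}$ sends the generating set $\{\sigma_i : i \in S\}$ onto itself and therefore maps $\Gamma^+$ onto $\Gamma^+$. Next I would combine this with the cone formula: for $\delta \in \mathfrak{D}_{\Gamma}$ and $x,y \in \Gamma$, using that $\delta$ is a group homomorphism and that $\delta(\Gamma^+) = \Gamma^+$, one computes
\[
x \leq y \Leftrightarrow x^{-1}y \in \Gamma^+ \Leftrightarrow \delta(x)^{-1}\delta(y) = \delta(x^{-1}y) \in \Gamma^+ \Leftrightarrow \delta(x) \leq \delta(y).
\]
Hence each $\delta$ is order-preserving; applying the same reasoning to $\delta^{-1} \in \mathfrak{D}_{\Gamma}$ shows it is an order-\emph{iso}morphism. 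As $\mathfrak{D}_{\Gamma}$ is closed under composition and inversion of such maps, it is a group of automorphisms of the ordered set $(\Gamma,\leq)$, as claimed.

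There is no genuine obstacle here. The only point requiring a word of care is the verification that $\delta(\Gamma^+) = \Gamma^+$, and this reduces to the elementary observation that a diagram automorphism merely permutes the monoid generators $\sigma_i$; the essential conceptual ingredient is simply that the order is recovered from its cone, so that any cone-preserving group automorphism is automatically order-preserving.
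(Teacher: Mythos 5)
Your proposal is correct and follows exactly the paper's argument: the paper establishes this proposition via the observation, stated immediately before it, that $\delta(\Gamma^+) = \Gamma^+$ for every $\delta \in \mathfrak{D}_{\Gamma}$, combined with the fact that the left-invariant order is determined by its positive cone. You merely spell out the two routine verifications (that $\delta$ permutes the monoid generators of $\Gamma^+$, and the equivalence $x \leq y \Leftrightarrow \delta(x) \leq \delta(y)$) that the paper leaves implicit.
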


\subsection{Skew braces}

\begin{defn} \label{defn:skew_brace}
    A \emph{skew brace} is a triple $B = (B,+,\circ)$ where $B$ is a set with two group operations $+$ and $\circ$ - both not necessarily commutative - that satisfy the identity
    \begin{equation}
        a \circ (b + c) = a \circ b - a + a \circ c.
    \end{equation}
\end{defn}

Note that $(B,+)$ and $(B,\circ)$ share the same identity!

A skew brace is called \emph{trivial} if the operations $+$ and $\circ$ coincide.

\emph{Sub-skew braces} of skew braces are, as usual, defined as subsets that are skew braces by restriction. Also, homomorphisms between skew braces are defined as maps respecting the skew brace operations.

Given a skew brace $B$, the \emph{$\lambda$-action} is the map
\[
\lambda: B \times B \to B ; \ (g,h) \mapsto \lambda_g(h) = - g + g \circ h.
\]
This map can be shown to satisfy the identities
\[
    \lambda_g(h_1 + h_2) = \lambda_g(h_1) + \lambda_g(h); \quad
    \lambda_{g_1 \circ g_2}(h) = \lambda_{g_1} (\lambda_{g_2}(h)), 
\]
so the assignment $(B,\circ) \to \Aut(B,+)$; $g \mapsto \lambda_g$ is a well-defined group homomorphism.

Note that a skew brace is trivial if and only if the $\lambda$-action is trivial!

Given a skew brace $B$, a subgroup $I \leq (B,+)$ is called a \emph{left ideal} if $\lambda_g(I) = I$ for all $g \in B$. Note that each left ideal is a subbrace of $B$ as $g \circ h = g + \lambda_g(h) \in I$ for $g,h \in I$. If, on top of that, $I$ is normal in $(B,+)$, we say $I$ is a \emph{strong left ideal}. Furthermore, a strong left ideal  $I$ is called an \emph{ideal} if $I$ is also normal in $(B,\circ)$.

Given a skew brace $B$ and an ideal $I \subseteq B$, the multiplicative and additive cosets of $I$ in $B$ coincide, and there is a well-defined skew brace structure on $B/I = \{b + I : b \in B\}$ that is given by $(a + I) + (b + I) = (a + b) + I$ and $(a + I) \circ (b + I) = (a \circ b) + I$. If the ideal is clear from the context, we abbreviate $a + I = \Bar{a}$.

A distinguished ideal of a skew brace $B$ is its \emph{socle}
\[
\Soc(B) = \ker(\lambda) = \{ g \in B \ : \ \forall h \in B:  \lambda_g(h) = h \} = \{ g \in B \ : \ \forall h \in B:  g \circ h = g + h \}.
\]

Given a skew brace $B$, one iteratively defines the \emph{retractions} $B_k$ ($k \geq 0$) by $B^{(0)} = B$ and $B^{(k+1)} = B^{(k)} / \Soc(B^{(k)})$ ($k \geq 0$). This process may terminate in a skew brace with $1$ element, which gives rise to the notion of \emph{right-nilpotency degree}: here, we say that a skew brace is \emph{right-nilpotent} of \emph{degree $\leq k$} for some integer $k \geq 0$, if $B^{(k)} = 0$. If $B$ is right-nilpotent of degree $\leq k$ but not of degree $k-1$, we say $B$ is \emph{right-nilpotent} of \emph{degree $k$}.

Skew braces of right-nilpotency degree $\leq 2$ can be constructed in a particularly easy way:

\begin{pro} \label{pro:right_nilpotent_of_degree_2_construction}
    Let $B$ be a skew brace. Then the following statements are equivalent:

    \begin{enumerate}
        \item $B$ is right-nilpotent of degree $\leq 2$.
        \item The map $\lambda: (B,+) \to \Aut(B,+)$; $g \mapsto \lambda_g$ is a homomorphism of groups.
        \item $\lambda_{\lambda_a(b)} = \lambda_b$ is satisfied for all $a,b \in B$.
    \end{enumerate}
\end{pro}

\begin{proof}
    \cite[Theorem 3.13]{bi_skew_brace_blocks}.
\end{proof}

The following proposition shows that the conditions imposed on $\alpha$ in the previous proposition are sufficient for the construction of a skew brace: 

\begin{pro} \label{pro:construct_skew_braces_with_alpha}
    Given a group $(B,+)$ and a homomorphism $\alpha: (B,+) \to \Aut(B,+)$; $a \mapsto \alpha_a$ with
    \begin{equation} \label{eq:invariance_for_nilp_deg_2}
        \alpha_{\alpha_a(b)} = \alpha_b,
    \end{equation}
    then $B_{\alpha} = (B,+,\subcirc{\alpha})$ is a skew brace, where
    \begin{equation} \label{eq:constructing_nilp_deg_2}
        a \subcirc{\alpha} b = a + \alpha_a(b).
    \end{equation}
\end{pro}

\begin{proof}
    This follows from a straightforward calculation.
\end{proof}

Observe that, by \cref{pro:right_nilpotent_of_degree_2_construction}, such a skew brace is necessarily right-nilpotent of degree $\leq 2$.

Finally, we need to recall the correspondence between regular subgroups of the holomorph and skew braces:

Given a group $G$, define for an element $g \in G$, the \emph{left translation} as the map $l_g \in \Sym_G$ that is given by $l_g(x) = gx$.
It is well-known that the group of left translations, $L_G = \{ l_g : g \in G \}$ is a subgroup of $\Sym_G$ that is isomorphic to $G$. The \emph{holomorph} of $G$ is now defined as the normalizer $\Hol(G) = N_{\Sym_G}(L_G) \leq \Sym_G$. It is well-known that $\Hol(G)$ admits a factorization $\Hol(G) = L_G \rtimes \Aut(G)$, where $\Aut(G)$ is the automorphism group of $G$, considered as a a subgroup of $\Sym_G$.

The following result connects skew braces and regular subgroups of the holomorph of a group:

\begin{thm} \label{thm:correspondence_regular_subgroups_and_skew_braces}
    Let $G = (G,+)$ be a group, denoted additively. Then the following two assignments are the mutually inverse constituents of a bijective correspondence between the set of skew brace structures $(G,+,\circ)$ and the set of regular subgroups $H \leq \Hol(G,+)$:

    \begin{enumerate}
        \item To a regular subgroup $H \leq \Hol(G,+)$, assign the skew brace structure $(G,+,\subcirc{H})$ by $\pi(e) \subcirc{H} h = \pi(h)$ ($\pi \in H$).
        \item To a skew brace structure $(G,+,\circ)$, assign the regular subgroup $L_{(H,\circ)} \leq \Hol(G,+)$.
    \end{enumerate}
\end{thm}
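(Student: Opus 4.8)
The plan is to exploit the two structural facts recalled just above: that a regular subgroup $H \leq \Hol(G,+)$ is in bijection with $G$ via the evaluation map $\Phi \colon H \to G$, $\pi \mapsto \pi(e)$, and that every element of $\Hol(G,+) = L_G \rtimes \Aut(G,+)$ is an affine map, i.e.\ of the form $x \mapsto a + \phi(x)$ for a unique $a \in G$ and $\phi \in \Aut(G,+)$. Regularity makes $\Phi$ a bijection, so for each $g \in G$ there is a unique $\pi_g \in H$ with $\pi_g(e) = g$; the operation $g \circ h := \pi_g(h)$ of assignment (1) is exactly the transport of composition in $H$ across $\Phi$, and I would first check that $\Phi$ is then a group isomorphism from $H$ (under composition) to $(G,\circ)$. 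The only point needing care is the identity $\pi_{\pi(e)} = \pi$, which is immediate from uniqueness; this also shows that $e$ (the image of $\id_G \in H$) is the identity of $(G,\circ)$, so $+$ and $\circ$ share an identity as required.

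Next I would verify the skew brace axiom for $(G,+,\circ)$. Writing $\pi_a$ in its affine form $\pi_a(x) = a + \phi_a(x)$, the normalization $\pi_a(e) = a$ forces the translation part to be $a$, so $a \circ x = a + \phi_a(x)$ with $\phi_a \in \Aut(G,+)$; this $\phi_a$ is precisely the map $\lambda_a$. The axiom then drops out of additivity of $\phi_a$, since both sides of $a \circ (b+c) = a \circ b - a + a \circ c$ reduce to $a + \phi_a(b) + \phi_a(c)$. This is the heart of the argument and explains the role of the holomorph: it is exactly the affineness of holomorph elements — as opposed to those of an arbitrary regular subgroup of $\Sym_G$ — that encodes the compatibility between $+$ and $\circ$.

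For assignment (2), given a skew brace $(G,+,\circ)$ I would set $H = L_{(G,\circ)}$, the $\circ$-left translations $x \mapsto g \circ x$. Regularity of $L_{(G,\circ)}$ on $G$ is Cayley's theorem for $(G,\circ)$. To see $L_{(G,\circ)} \leq \Hol(G,+)$, I rewrite each $\circ$-translation via the definition of the $\lambda$-action as $g \circ x = g + \lambda_g(x)$, so the permutation $x \mapsto g \circ x$ is the composite of $\lambda_g \in \Aut(G,+)$ followed by the $+$-translation $l_g \in L_G$ (using that $\lambda_g$ is an additive automorphism, as recorded earlier); hence it lies in $L_G \rtimes \Aut(G,+) = \Hol(G,+)$.

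Finally I would check the two assignments are mutually inverse. Starting from a brace, assignment (2) gives $\pi_g = (x \mapsto g \circ x)$, whose value at $e$ is $g$, so applying (1) returns $g \circ' h = \pi_g(h) = g \circ h$ and the operation is recovered. Starting from a regular $H$, assignment (1) yields $g \circ h = \pi_g(h)$, so the $\circ$-translation $x \mapsto g \circ x$ coincides with $\pi_g$, and as $g$ ranges over $G$ these exhaust $H$ by regularity; thus assignment (2) returns $H$. I do not expect a genuine obstacle beyond bookkeeping: the one subtlety to state carefully is the uniqueness argument behind $\pi_{\pi(e)} = \pi$ together with the matching $\phi_a = \lambda_a$, after which every verification is formal once the affine description of $\Hol(G,+)$ is in hand.
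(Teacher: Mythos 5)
Your proposal is correct and complete. Note that the paper itself does not prove this theorem but simply cites \cite[Theorem 4.2]{Guarnieri_Vendramin}; your argument is essentially the standard proof given there: regularity makes the evaluation map $\pi \mapsto \pi(e)$ a bijection $H \to G$ along which composition is transported, the affine form $\pi_a(x) = a + \phi_a(x)$ of elements of $\Hol(G,+) = L_G \rtimes \Aut(G,+)$ forces $\phi_a = \lambda_a$ and yields the skew brace identity, and the identity $g \circ x = g + \lambda_g(x)$ with $\lambda_g \in \Aut(G,+)$ gives the converse containment $L_{(G,\circ)} \leq \Hol(G,+)$. You also (correctly, and silently) repaired the statement's typo: assignment (2) should read $L_{(G,\circ)}$, not $L_{(H,\circ)}$, and your verification that the two assignments are mutually inverse is exactly the bookkeeping required.
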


\begin{proof}
    \cite[Theorem 4.2]{Guarnieri_Vendramin}.
\end{proof}

From the formula $g \circ h = g + \lambda_g(h)$ it follows that $L_{(G,\circ)} \leq L_{(G,+)} \rtimes \mathrm{im}(\lambda)$. On the other hand, we also see that if $H \leq L_{(G,+)} \rtimes A$ for a subgroup $A \leq \Aut(G,+)$, the $\lambda$-map of $(G,+,\subcirc{H})$ has its image in $A$. We conclude:

\begin{pro} \label{pro:restrict_lambda}
    Let $G = (G,+)$ be a group and let $A \leq \Aut(G,+)$. Then the above correspondence restricts to a bijective correspondence between:
    \begin{enumerate}
        \item regular subgroups of $G \rtimes A$, and
        \item skew brace structures on $(G,+)$ with $\mathrm{im}(\lambda) \leq A$.
    \end{enumerate}
\end{pro}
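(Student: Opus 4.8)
The plan is to deduce the statement directly from the two observations recorded immediately before the proposition, since these already contain both halves of the argument. Throughout I identify $G$ with its image $L_{(G,+)}$ under the left regular representation, so that the subgroup $G \rtimes A$ of $\Hol(G,+) = L_{(G,+)} \rtimes \Aut(G,+)$ is exactly $L_{(G,+)} \rtimes A$. The only real task is then to verify that the two mutually inverse assignments of \cref{thm:correspondence_regular_subgroups_and_skew_braces} restrict to the two indicated subsets.

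First I would treat the passage from skew braces to regular subgroups. Let $(G,+,\circ)$ be a skew brace with $\mathrm{im}(\lambda) \leq A$. Since $(G,\circ)$ is a group structure on the underlying set $G$, its left regular representation $L_{(G,\circ)}$ is automatically a regular subgroup of $\Sym_G$. By the first observation (which rests on the identity $g \circ h = g + \lambda_g(h)$), one has $L_{(G,\circ)} \leq L_{(G,+)} \rtimes \mathrm{im}(\lambda)$, and the hypothesis $\mathrm{im}(\lambda) \leq A$ upgrades this to $L_{(G,\circ)} \leq L_{(G,+)} \rtimes A = G \rtimes A$. Hence $L_{(G,\circ)}$ is a regular subgroup of $G \rtimes A$, as required.

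Conversely, I would start from a regular subgroup $H \leq G \rtimes A = L_{(G,+)} \rtimes A$ and form the associated skew brace $(G,+,\subcirc{H})$ via $\pi(e) \subcirc{H} h = \pi(h)$. By the second observation, the $\lambda$-map of this skew brace has image contained in $A$, so $(G,+,\subcirc{H})$ lies in the prescribed class of skew brace structures.

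Finally, since the two assignments of \cref{thm:correspondence_regular_subgroups_and_skew_braces} are mutually inverse bijections between \emph{all} regular subgroups of $\Hol(G,+)$ and \emph{all} skew brace structures on $(G,+)$, and since the two previous paragraphs show that each assignment carries the relevant subset into the other, their restrictions are again mutually inverse and hence give the claimed bijection. I expect no genuine obstacle here: the substantive input, namely the formula $g \circ h = g + \lambda_g(h)$ together with its two consequences for $\mathrm{im}(\lambda)$ and for the location of $L_{(G,\circ)}$ inside the holomorph, has already been established, so that what remains is merely the bookkeeping of checking that the constraints $\mathrm{im}(\lambda) \leq A$ and $H \leq G \rtimes A$ correspond to one another under the restriction.
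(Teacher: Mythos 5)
Your proposal is correct and matches the paper's own reasoning exactly: the paper derives \cref{pro:restrict_lambda} from precisely the two observations stated just before it (that $g \circ h = g + \lambda_g(h)$ forces $L_{(G,\circ)} \leq L_{(G,+)} \rtimes \mathrm{im}(\lambda)$, and that $H \leq L_{(G,+)} \rtimes A$ forces $\mathrm{im}(\lambda) \leq A$ for the induced brace), combined with the bijection of \cref{thm:correspondence_regular_subgroups_and_skew_braces}. Your write-up merely makes explicit the routine final step that restrictions of mutually inverse bijections along compatible subsets remain mutually inverse, which the paper leaves implicit.
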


Observe that, in particular, the trivial brace structure $(G,+,+)$ corresponds to the regular subgroup $L_G \leq \Hol(G,+)$.

\section{Order automorphisms of left \texorpdfstring{$\ell$}{l}-groups} \label{sec:order_automorphisms}

For our investigation of groups of $\IG$-type, it will be favourable to gain a good understanding of the automorphisms of the underlying lattice. In this section, we will show that this is indeed possible under certain \emph{rigidity} conditions. It will be shown in \cref{sec:IG_for_AT} that spherical Artin-Tits groups indeed satisfy these rigidity conditions.

\begin{defn} \label{defn:rigid_garside_group}
    Let $G$ be a left $\ell$-group. We say that $G$ is \emph{rigid} if $G$ is noetherian and the following two conditions are satisfied:
    \begin{enumerate}
        \item For any $x,y \in \At(G)$ such that $x \neq y$, there is a unique $z \in \At(G)$ such that
        $xz \leq x \vee y$.
        \item For any $x \in \At(G)$, there is at most one $z \in \At(G)$ such that $xz \not\leq x \vee y$ for all $y \in \At(G)$.
    \end{enumerate}

    If $G^{\mathrm{op}}$ is rigid, $G$ is called \emph{dually rigid}. If $G$ is rigid and dually rigid, then $G$ is called \emph{bi-rigid}. 
\end{defn}

Using left-invariance, the following proposition follows immediately from the definition:

\begin{pro} \label{pro:shifting_rigidity}
    Let $G$ be a rigid left $\ell$-group. Then
    \begin{enumerate}
        \item For any $g,h_1,h_2 \in G$, with $h_1 \neq h_2$ and $g \prec h_1,h_2$, there is a unique $h' \in G$ such that $h' \succ g$ and $h' \leq h_1 \vee h_2$.
        \item For any $g,h \in G$ with $g \prec h$, there is at most one $f \in G$ such that $f \succ h$ and $f \not\leq h \vee h'$ for any $h' \in G$ with $h' \succ g$.
    \end{enumerate}
\end{pro}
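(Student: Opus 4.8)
The plan is to obtain both clauses directly from the two conditions of \cref{defn:rigid_garside_group} by transporting them along a left translation, reducing everything to the base case $g=e$. Since $G$ is a left $\ell$-group, each left translation $l_a$ is an order-automorphism of $(G,\leq)$: it is a bijection, and left-invariance applied to both $l_a$ and $l_{a^{-1}}=l_a^{-1}$ shows that $l_a$ and its inverse preserve $\leq$. Hence $l_a$ preserves the covering relation $\prec$ and the operations $\vee,\wedge$, and it carries $\At(G)$ (the covers of $e$) bijectively onto the set of covers of $a$. I would then normalize by applying $l_{g^{-1}}$: for clause (1) put $x=g^{-1}h_1$, $y=g^{-1}h_2$, so that $x,y\in\At(G)$, $x\neq y$, and $g^{-1}(h_1\vee h_2)=x\vee y$; for clause (2) put $x=g^{-1}h\in\At(G)$ and observe that the covers $h'\succ g$ are exactly the elements $gy$ with $y\in\At(G)$, with $g(x\vee y)=h\vee h'$.

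For clause (1), \cref{defn:rigid_garside_group}(1) supplies a unique atom $z$ with $xz\leq x\vee y$; as $z$ is an atom, left-invariance gives $x=xe\prec xz$, so $xz$ covers $x$. Transporting back by $l_g$, the element $h'=gxz=h_1z$ is the unique element that covers $h_1$ and satisfies $h'\leq h_1\vee h_2$. Here I would flag the one genuine point raised by the statement as worded: the element produced covers $h_1$, not $g$. Indeed $h_1$ and $h_2$ themselves cover $g$ and lie below $h_1\vee h_2$, so requiring only that $h'$ cover $g$ can never single out a unique element; the displayed condition $h'\succ g$ must therefore be read as $h'\succ h_1$, which matches the analogous requirement $f\succ h$ in clause (2). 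It is this corrected reading that is equivalent, under $l_g$, to \cref{defn:rigid_garside_group}(1), and it is what the proof actually establishes.

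For clause (2), \cref{defn:rigid_garside_group}(2) gives at most one atom $z$ with $xz\not\leq x\vee y$ for every $y\in\At(G)$; again $xz$ covers $x$, so $f=gxz=hz$ covers $h$, and, using $g(x\vee y)=h\vee h'$ together with the fact that $h'=gy$ runs over all covers of $g$ as $y$ runs over $\At(G)$, the condition $xz\not\leq x\vee y$ (for all $y$) becomes $f\not\leq h\vee h'$ (for all $h'\succ g$). Hence there is at most one such $f$. The whole argument is thus just the left-invariant transport of the definition; the only bookkeeping is the dictionary between atoms $z$ and the covers $h_1z$, $hz$ they produce after translation, and the sole substantive subtlety is the mis-stated covering condition in clause (1), which the transport from $g=e$ resolves unambiguously in favour of $h'\succ h_1$.
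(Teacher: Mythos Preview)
Your argument is correct and is exactly the approach the paper takes: the paper's ``proof'' is simply the sentence ``Using left-invariance, the following proposition follows immediately from the definition,'' and your transport via $l_{g^{-1}}$ makes that sentence precise.

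You are also right to flag the wording of clause~(1): as stated, both $h_1$ and $h_2$ cover $g$ and lie below $h_1\vee h_2$, so uniqueness fails trivially. The intended reading is indeed $h'\succ h_1$, which is precisely what left-invariance of \cref{defn:rigid_garside_group}(1) yields and, more importantly, is exactly how the proposition is invoked in the proof of \cref{pro:rigidity_of_positive_cone}: there one has $h_{H-2}\prec h_{H-1},h'$ and applies uniqueness to the cover $g\succ h_{H-1}$ with $g\leq h_{H-1}\vee h'$. So your correction matches both the definition it translates and its sole application in the paper.
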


It turns out that rigidity for a left $\ell$-group implies that the group is also rigid under order-automorphisms:

\begin{pro} \label{pro:rigidity_of_positive_cone}
    Let $G$ be a rigid left $\ell$-group and let $\varphi \in \Aut(G,\leq)$ be an order-automorphism such that $\varphi(x) = x$ for all $x \in \At(G)$. Then $\varphi(g) = g$ for all $g \in G^+$.
\end{pro}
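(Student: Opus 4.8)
The plan is to prove the statement by induction on the height $H(g)$, which is finite for every $g \in G^+$ by \cref{pro:successor_chains_exist}. Before starting, I would dispose of the degenerate cases: if $\At(G)$ has at most one element then, by \cref{pro:noetherian_l_groups_generated_by_atoms}, $G$ is trivial or infinite cyclic and $(G,\leq)$ is a chain, so any order-automorphism fixing the atoms is the identity. Hence assume there are two distinct atoms $x \neq y$. The base case $H(g)=0$, i.e. $g=e$, is then handled by the observation that $x \wedge y = e$: indeed $e \leq x \wedge y \leq x$, and since $x$ covers $e$ while $x \wedge y = x$ would force $x \leq y$ (impossible for distinct atoms), we get $x \wedge y = e$; as order-automorphisms preserve meets, $\varphi(e) = \varphi(x) \wedge \varphi(y) = x \wedge y = e$. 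The case $H(g)=1$ is exactly the hypothesis on atoms.

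For the inductive step, let $H(g) = n \geq 2$ and assume $\varphi$ fixes every element of $G^+$ of height $< n$. I would fix a minimal successor chain $e = g_0 \prec g_1 \prec \dots \prec g_n = g$; minimality forces $H(g_i) = i$, so in particular $\varphi$ fixes $g_{n-1}$ and $g_{n-2}$. Since $\varphi$ is an order-automorphism fixing $g_{n-1}$, it permutes the set of covers of $g_{n-1}$, and $\varphi(g)$ is again such a cover; the whole problem is to show that this permutation fixes the particular cover $g$. The tool for this is the shifted rigidity of \cref{pro:shifting_rigidity}, applied with base point $g_{n-2}$ and the two covers $g_{n-1}$ and $h'$, where $h'$ ranges over covers of $g_{n-2}$. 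I would first note that every such $h'$ lies in $G^+$ and has $H(h') \leq n-1$, so by the inductive hypothesis $\varphi(h') = h'$; consequently $\varphi$ fixes every join $g_{n-1} \vee h'$.

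The argument then splits according to whether $g$ lies below one of these joins. In the generic case there is a cover $h' \neq g_{n-1}$ of $g_{n-2}$ with $g \leq g_{n-1} \vee h'$. Then \cref{pro:shifting_rigidity}(1) identifies $g$ as the unique cover of $g_{n-1}$ lying below $g_{n-1} \vee h'$; since $\varphi(g)$ is a cover of $g_{n-1}$ and $\varphi(g) \leq \varphi(g_{n-1} \vee h') = g_{n-1} \vee h'$, uniqueness yields $\varphi(g) = g$. In the exceptional case $g$ lies below no such join; by \cref{pro:shifting_rigidity}(2) there is at most one cover of $g_{n-1}$ with this property, so $g$ is the unique exceptional cover. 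As $\varphi$ fixes $g_{n-1}$, $g_{n-2}$ and all the joins $g_{n-1} \vee h'$, it preserves the property of being exceptional; being a permutation of the covers of $g_{n-1}$ that preserves a property enjoyed by exactly one cover, it must fix that cover, so again $\varphi(g) = g$. This completes the induction.

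I expect the exceptional case to be the main obstacle. In the generic case $g$ is named concretely by a join-type expression that $\varphi$ manifestly preserves, whereas in the exceptional case $g$ cannot be pinned down by a join, and one must instead argue indirectly that $\varphi$ preserves the order-theoretic property singling out the unique exceptional cover — this is exactly where rigidity condition (2) is indispensable. The accompanying bookkeeping, namely checking that the auxiliary elements $g_{n-2}$, its covers, and the joins $g_{n-1} \vee h'$ all have height $< n$ so that the inductive hypothesis applies, is routine but must be carried out for the case split to be legitimate.
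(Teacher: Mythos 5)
Your proof is correct and takes essentially the same route as the paper's: induction on the height $H(g)$, with the degenerate case $|\At(G)| \leq 1$ and the base cases $H(g) \in \{0,1\}$ handled identically (including the meet argument $\varphi(e) = \varphi(x) \wedge \varphi(y) = e$), and the inductive step resolved by the same dichotomy — either $g$ lies below some join $g_{n-1} \vee h'$ with $h' \succ g_{n-2}$ and is pinned down by the uniqueness in \cref{pro:shifting_rigidity}(1), or it is the unique exceptional cover singled out by \cref{pro:shifting_rigidity}(2), which $\varphi$ must then fix. The only differences are expository: you spell out bookkeeping the paper leaves implicit, such as the height bounds on the auxiliary covers $h'$ and the fact that $\varphi$ permutes the covers of $g_{n-1}$.
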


\begin{proof}
    Note that $G$ is trivial if $|\At(G)| = 0$, so we may assume that $\At(G)$ is non-empty.

    We prove that for $\varphi$ as in the statement of the proposition, we have $\varphi(g) = g$ for all $g \in G^+$ by induction over the height $H = H(g)$. Note that the case $H=1$ is the statement of the lemma. For $H = 0$, then $g = e$. In the case that $|\At(G)| = 1$, $e$ is the unique element covered by $g$, so $\varphi(e) = e$. If $|\At(G)| > 1$, then there are $x,y \in \At(G)$ with $x \neq y$. For these atoms, we have $x \wedge y = e$. Consequently, $\varphi(e) = \varphi(x) \wedge \varphi(y) = x \wedge y = e$.

    So assume now that we are given a $g \in G^+$ with $H = H(g) \geq 2$, and that all $h \in G^+$ with $H(h) < H$ are fixed under $\varphi$. Now pick a successor chain $e = h_0 \prec h_1 \prec \ldots \prec h_H = g $ with $h_0 = e$. First suppose that there is an $h' \in G^+$ with $h_{H-2} \prec h'$ and $g \leq h' \vee h_{H-1}$. By our inductive assumption, $h_{H-2}$, $h_{H-1}$ and $h'$ are fixed under $\varphi$. Applying $\varphi$, we get that $\varphi(g) \succ \varphi(h_{H-1}) = h_{H-1}$ and $\varphi(g) \leq \varphi(h_{H-1}) \vee \varphi(h') = h_{H-1} \vee h'$. But by \cref{pro:shifting_rigidity}, this implies $\varphi(g) = g$. If there is \emph{no} $h' \succ h_{H-2}$ with $g \leq h_{H-1} \vee h'$, then applying $\varphi$, this implies the non-existence of an $h' \succ h_{H-2}$ with $\varphi(g) \leq h_{H-1} \vee h'$. Again, \cref{pro:shifting_rigidity} implies that $\varphi(g) = g$, thus finishing the inductive step.
    \end{proof}

\begin{lem} \label{cor:shifting_rigidity_of_negative_cones}
Let $G$ be a dually rigid left $\ell$-group and let $g \in G$. If $\varphi \in \Aut(G,\leq)$ is such that $\varphi(h) = h$ for all $h \in G$ with $h \prec g$, then $\varphi(h) = h$ for all $h \in g^{\downarrow} = \{ f \in G: f \leq g \}$.
\end{lem}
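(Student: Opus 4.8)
The plan is to reduce the claim to \cref{pro:rigidity_of_positive_cone} by two bookkeeping moves: a left translation that carries $g$ to the identity, and the passage to the opposite order $G^{\mathrm{op}}$, which turns the negative cone into a positive cone and the dual atoms $\Atd(G)$ into the atoms $\At(G^{\mathrm{op}})$. Recall that every left translation $l_a : x \mapsto ax$ is an order-automorphism of $(G,\leq)$ by left-invariance, and that $\Aut(G,\leq) = \Aut(G^{\mathrm{op}})$, since a bijection respects $\leq$ exactly when it respects the reversed order.

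First I would set $\psi = l_g^{-1} \circ \varphi \circ l_g = l_{g^{-1}} \circ \varphi \circ l_g$, so that $\psi(x) = g^{-1}\varphi(gx)$; being a composite of order-automorphisms, $\psi$ is an order-automorphism of $(G,\leq)$, hence of $G^{\mathrm{op}}$. Next I would verify that $\psi$ fixes every element of $\At(G^{\mathrm{op}}) = \Atd(G)$. Indeed, if $x \prec e$ in $G$, then applying the order-automorphism $l_g$ sends the covering $x \prec e$ to the covering $gx \prec g$; by hypothesis $\varphi$ fixes everything strictly covered by $g$, so $\varphi(gx) = gx$, whence $\psi(x) = g^{-1}\cdot gx = x$.

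Now I would invoke the hypotheses. Since $G$ is dually rigid, $G^{\mathrm{op}}$ is rigid, and since $G$ is a left $\ell$-group so is $G^{\mathrm{op}}$; thus \cref{pro:rigidity_of_positive_cone} applies to the rigid left $\ell$-group $G^{\mathrm{op}}$ together with the order-automorphism $\psi$, which fixes all atoms of $G^{\mathrm{op}}$. The conclusion is that $\psi(x) = x$ for all $x \in (G^{\mathrm{op}})^+ = G^-$, i.e.\ for all $x \leq e$. Translating back, for any $h \in g^{\downarrow}$ we have $g^{-1}h \leq e$, so $\psi(g^{-1}h) = g^{-1}h$; expanding the definition of $\psi$ gives $g^{-1}\varphi(h) = g^{-1}h$, hence $\varphi(h) = h$, which is exactly the desired statement.

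This is a reduction rather than a genuinely new argument, so I do not expect a serious obstacle; the only points demanding care are the verification that the conjugate $\psi$ actually fixes the dual atoms—which rests on $l_g$ preserving the covering relation $\prec$—and the correct transport to the opposite order, namely the identifications $\At(G^{\mathrm{op}}) = \Atd(G)$ and $(G^{\mathrm{op}})^+ = G^-$, together with the fact that rigidity and the left $\ell$-group property pass to $G^{\mathrm{op}}$ exactly under the stated dual-rigidity assumption.
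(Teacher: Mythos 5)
Your proof is correct and follows essentially the same route as the paper: the paper also conjugates $\varphi$ by the left translation $l_g$ to get the map $h \mapsto g^{-1}\varphi(gh)$, observes that it fixes all dual atoms, and applies the dualized form of \cref{pro:rigidity_of_positive_cone} to conclude that $G^-$ is fixed pointwise before translating back. Your write-up merely makes explicit the details the paper leaves implicit, namely that $l_g$ preserves the covering relation and that rigidity and the left $\ell$-group property transfer to $G^{\mathrm{op}}$ under the dual-rigidity hypothesis.
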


\begin{proof}
    Dualizing \cref{pro:rigidity_of_positive_cone}, we see that the statement is true when $g = e$. Else, consider the map $\varphi': G \to G$; $h \mapsto g^{-1} \varphi(gh)$. The map $\varphi'$ is an automorphism of ordered sets such that $\varphi(h) = h$ for all $h \in \Atd(G)$. It follows that $\varphi'(h) = h$ for all $h \in G^-$. As a consequence, $\varphi(h) = h$ for all $h \in g^{\downarrow}$.
\end{proof}

\begin{pro} \label{pro:rigidity_of_birigid_l_groups}
    Let $G$ be a bi-rigid left $\ell$-group where $\bigwedge \Atd(G)$ exists, and let $\varphi \in \Aut(G,\leq)$ be an order-automorphism such that $\varphi(x) = x$ for all $x \in \At(G)$, then $\varphi = \id_G$.
\end{pro}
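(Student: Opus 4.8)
The plan is to reduce everything to the positive cone, where \cref{pro:rigidity_of_positive_cone} already applies, and to use the meet $m := \bigwedge \Atd(G)$ as a fixed anchor that transfers the question below $e$ back up into $G^+$.

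First I would record the consequences of the hypotheses that cost nothing. By \cref{pro:rigidity_of_positive_cone}, $\varphi$ fixes $G^+$ pointwise; in particular $\varphi(e) = e$, so $\varphi$ carries covers of $e$ to covers of $e$ and therefore permutes the set $\Atd(G)$ of dual atoms. Consequently $\varphi$ fixes $m$, since an order-automorphism commutes with any meet that exists: $\varphi(m) = \bigwedge \varphi(\Atd(G)) = \bigwedge \Atd(G) = m$. It is also worth noting at the outset that inversion restricts to a bijection $\At(G) \to \Atd(G)$, $a \mapsto a^{-1}$: from left-invariance one checks $x \prec e \Leftrightarrow x^{-1} \succ e$ (if $x \prec e$ and $e < w < x^{-1}$, then left-multiplying by $x$ gives $x < xw < e$, contradicting $x \prec e$, and symmetrically). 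In particular every atom lies strictly above every dual atom, so the atoms---although fixed---give no \emph{order} relation that separates one dual atom from another; this is the source of the difficulty.

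The heart of the argument, and the step I expect to be the main obstacle, is to show that $\varphi$ fixes each dual atom, and here I would exploit $m$. Since $h \geq m \Leftrightarrow m^{-1}h \in G^+$, the principal filter $m^{\uparrow} = \{h : h \geq m\}$ equals $m \cdot G^+$, contains $G^+$ and all of $\Atd(G)$, and is mapped onto itself by $\varphi$ because $\varphi(m) = m$. Conjugating by the left translation $l_m$ (an order-automorphism of $G$) produces $\psi := l_m^{-1} \circ \varphi \circ l_m \in \Aut(G,\leq)$ with $\psi(e) = e$, and one has the equivalence that $\psi$ fixes $G^+$ pointwise if and only if $\varphi$ fixes $m^{\uparrow}$ pointwise. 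By \cref{pro:rigidity_of_positive_cone} applied to $\psi$, it therefore suffices to prove that $\psi$ fixes every atom, equivalently that $\varphi$ fixes every \emph{upper cover} of $m$. This last point is exactly where the remaining rigidity must be spent: an upper cover $ma$ of $m$ should be pinned down by its relations to the lower covers of $m$ and to the already-fixed atoms through the uniqueness clauses of \cref{pro:shifting_rigidity} applied in $G$ and, dually, in $G^{\mathrm{op}}$ (which is rigid since $G$ is dually rigid). I expect the careful bookkeeping here---matching each upper cover of $m$ with the fixed atom it determines and invoking dual-rigidity uniqueness to exclude any nontrivial permutation---to be the technical core. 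Granting it, \cref{pro:rigidity_of_positive_cone} gives $\psi = \id_{G^+}$, hence $\varphi$ fixes $m^{\uparrow}$, and in particular every dual atom.

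Once all dual atoms are fixed, the rest is downward propagation. Applying \cref{cor:shifting_rigidity_of_negative_cones} with $g = e$ (whose lower covers are precisely the dual atoms) shows $\varphi$ fixes $e^{\downarrow} = G^-$; equivalently this is the dual of \cref{pro:rigidity_of_positive_cone} for the rigid group $G^{\mathrm{op}}$. To reach an arbitrary $h \in G$ I would use $h \leq h \vee e =: p \in G^+$, so that $G = \bigcup_{p \in G^+} p^{\downarrow}$, and prove that $\varphi$ fixes $p^{\downarrow}$ for every $p \in G^+$ by induction on the height $H(p)$. The base case $p = e$ is the statement for $G^-$ just obtained; for the inductive step, \cref{cor:shifting_rigidity_of_negative_cones} reduces the claim to fixing the lower covers $h' \prec p$, and each such $h'$ is handled either because $h' \in G^+$, or because $h' \vee e$ has strictly smaller height (so the inductive hypothesis applies), or because $h' \wedge e$ is an already-fixed dual atom, in which case \cref{pro:shifting_rigidity} determines $h'$ uniquely. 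This yields $\varphi = \id_G$.
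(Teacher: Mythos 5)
There is a genuine gap, and it sits exactly where you flagged it: the claim that $\varphi$ fixes every upper cover of $m = \bigwedge \Atd(G)$ (equivalently, that $\psi = l_m^{-1} \circ \varphi \circ l_m$ fixes every atom) is never proved, only announced as ``the technical core.'' It is doubtful that the rigidity clauses can deliver it in the configuration you describe: \cref{pro:shifting_rigidity} pins down an element only through cover relations to \emph{already-fixed} elements ($g \prec h_1,h_2$ with $g,h_1,h_2$ fixed), and the elements known to be fixed at that stage --- $G^+$ and $m$ itself --- do not stand in such a configuration with the covers $ma$, $a \in \At(G)$, which in general are neither positive nor related to the atoms by any cover relation the uniqueness clauses could exploit. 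Your closing induction has independent holes as well: in case (b) you need $H(h' \vee e) < H(p)$, but relative height is not monotone along the order in a general noetherian left $\ell$-group (nothing in the hypotheses makes the order graded; \cref{pro:height_is_length_of_minimal_factorization} only bounds $H(p)$ from \emph{above} by concatenating factorizations), the trichotomy (a)/(b)/(c) is not shown to be exhaustive, and it is not explained how \cref{pro:shifting_rigidity} ``determines $h'$ uniquely'' in case (c), since no fixed cover configuration beneath $h'$ is exhibited.

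The paper's proof shows that both difficulties can be bypassed entirely; your instinct that $s = \bigwedge \Atd(G)$ is the anchor is right, but it is used by translating \emph{on the other side}. Since $s \leq x$ for every $x \in \Atd(G)$, one has $s^{-1}x \geq e$; hence for any $h \in G^+$, every lower cover of $hs^{-1}$ has the form $hs^{-1}x$ with $x \in \Atd(G)$ and therefore lies in $G^+$, where $\varphi$ is already the identity by \cref{pro:rigidity_of_positive_cone}. Now \cref{cor:shifting_rigidity_of_negative_cones} propagates fixedness down the entire principal ideal $(hs^{-1})^{\downarrow}$, and taking $h = g \vee e$ for an arbitrary $g \in G$ gives $g \leq h \leq hs^{-1}$, whence $\varphi(g) = g$. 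No separate argument for the dual atoms, for the filter $m^{\uparrow}$, or for the upper covers of $m$ is needed; your step 2, even if it could be completed, would only establish a fragment of what this three-line translation argument yields directly.
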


\begin{proof}
    By \cref{pro:rigidity_of_positive_cone}, $\varphi$ fixes $G^+$ pointwise. Let $s = \bigwedge \Atd(G)$, then $s^{-1} \geq e$.

    Now let $g \in G^-$ be arbitrary and let $h = g \vee e \in G^+$. In particular, $hs^{-1} \in G^+$ and each $h' \prec hs^{-1}$ is of the form $h' = hs^{-1}x$ with $x \in \Atd(G)$. As $s \leq x$ for all $x \in \Atd(G)$ it follows that $s^{-1}x \geq e$ for all $x \in \Atd(G)$. Therefore, $h' \in G^+$ for all $h' \prec hs^{-1}$ which implies that $\varphi(h') = h'$ for all $h' \prec hs^{-1}$. By \cref{cor:shifting_rigidity_of_negative_cones}, we infer that $\varphi(h') = h'$ for all $h' \in (hs^{-1})^{\downarrow}$ and as $g \leq h \leq hs^{-1}$, it follows that $\varphi(g) = g$.
\end{proof}

\begin{thm} \label{thm:rigidity_of_restriction_to_atoms}
    Let $G$ be a bi-rigid left $\ell$-group where $\bigwedge \Atd(G)$ exists, then the restriction
    \[
    \rho: \Aut(G,\leq)_e \to \Sym_{\At(G)}; \ \varphi \mapsto \varphi |_{\At(G)}
    \]
    is injective.
\end{thm}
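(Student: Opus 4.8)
The plan is to recognize $\rho$ as the group homomorphism induced by the natural action of $\Aut(G,\leq)_e$ on the set of atoms, and then to reduce injectivity to the triviality of its kernel, which is precisely the content of \cref{pro:rigidity_of_birigid_l_groups}. Since the hypotheses of that proposition — bi-rigidity and the existence of $\bigwedge \Atd(G)$ — are exactly the standing hypotheses here, essentially all the substance has already been carried out, and what remains is a short formal argument.

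First I would check that $\rho$ is well-defined as a map into $\Sym_{\At(G)}$, and in fact is a homomorphism. If $\varphi \in \Aut(G,\leq)_e$, then $\varphi$ preserves the covering relation $\prec$ in both directions and fixes $e$; since the atoms are by definition exactly the elements covering $e$, the restriction $\varphi|_{\At(G)}$ is a bijection of $\At(G)$. As $\Aut(G,\leq)_e$ acts on $G$ fixing $e$, it thereby acts on $\At(G)$, and $\rho$ is the associated permutation representation, hence a group homomorphism.

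Once this is set up, injectivity amounts to $\Ker \rho = \{\id_G\}$. An element $\varphi \in \Ker \rho$ is an order-automorphism fixing $e$ with $\varphi(x) = x$ for every $x \in \At(G)$. Invoking \cref{pro:rigidity_of_birigid_l_groups} directly, such a $\varphi$ must equal $\id_G$, so the kernel is trivial and $\rho$ is injective. (If one prefers to avoid the homomorphism formalism, the same conclusion follows by a composition argument: given $\varphi,\psi \in \Aut(G,\leq)_e$ agreeing on $\At(G)$, the automorphism $\psi^{-1}\varphi$ fixes every atom, hence is $\id_G$ by \cref{pro:rigidity_of_birigid_l_groups}, whence $\varphi = \psi$.)

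I expect no genuine obstacle in this final step, as the real work lies upstream in the chain \cref{pro:rigidity_of_positive_cone}, \cref{cor:shifting_rigidity_of_negative_cones}, \cref{pro:rigidity_of_birigid_l_groups}, which propagate fixing-the-atoms to fixing all of $G^+$ and then all of $G$. The only points deserving a line of care are the two verifications that an order-automorphism fixing $e$ genuinely permutes $\At(G)$ (so that $\rho$ lands in $\Sym_{\At(G)}$) and that membership in $\Ker\rho$ is the literal hypothesis \enquote{$\varphi(x)=x$ for all $x \in \At(G)$} required by \cref{pro:rigidity_of_birigid_l_groups}.
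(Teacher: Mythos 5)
Your proof is correct and follows the paper's own argument exactly: the paper likewise reduces injectivity to triviality of $\ker(\rho)$ and invokes \cref{pro:rigidity_of_birigid_l_groups} to conclude $\varphi = \id_G$. Your added verification that $\rho$ genuinely lands in $\Sym_{\At(G)}$ (via preservation of the covering relation) is a reasonable bit of extra care that the paper leaves implicit.
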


\begin{proof}
    We show that $\ker(\rho)$ is trivial: if $\rho(\varphi) = \id$, then this means nothing else than $\varphi(x) = x$ for all $x \in \At(G)$. By \cref{pro:rigidity_of_birigid_l_groups}, it follows that $\varphi = \id_G$.
\end{proof}

We now give the definition of a group of \emph{$\IG$-type}:

\begin{defn} \label{defn:IGamma}
    Let $G$ be a left-ordered group. A left-ordered group $H$ is of \emph{$\IG$-type}, if there is an order-isomorphism $\iota: (H,\leq) \overset{\sim}{\to} (G,\leq)$ with $\iota(e_H) = e_G$. We call the tuple $(H,\iota)$ an \emph{$\IG$-formation}.

    For a given left-ordered group $G$, two $\IG$-formations $(H,\iota)$, $(H',\iota')$ are \emph{equivalent} if there is an isomorphism of left-ordered group $f: H \to H'$ such that $\iota' \circ f = \iota$.

    An $\IG$-formation $(H,\iota)$ is called \emph{trivial} if it is equivalent to the $\IG$-formation $(G,\mathrm{id}_G)$, that is, if $\iota$ is an isomorphism of left-ordered groups.
\end{defn}

Obviously, if there is \emph{any} order-isomorphism $\iota: H \overset{\sim}{\to} G$, then there is one with $\iota(e_H) = e_G$, by left-invariance. However, fixing $\iota(e_H) = e_G$ once and for all, will later spare us from shifting around order-isomorphisms.

Note that a left-ordered group $G$ is equivalent to a partially ordered set $(P,\leq)$ with a distinguished point $e$ and a regular action of a group $G$ on $(P,\leq)$. We therefore obtain for the group $\Aut(G,\leq)$ of order-automorphisms - that are not necessarily group automorphisms - the following decomposition:

\begin{pro} \label{pro:factorizing_automorphism_groups}
    Let $G$ be a left-ordered group. Then, then group $\Aut(G,\leq)$ factorizes as
    \[
    \Aut(G,\leq) = L_G \cdot \Aut(G,\leq)_e \ ; \ L_G \cap \Aut(G,\leq)_e = \{ \mathrm{id}_G \},
    \]
    where $\Aut(G,\leq)_e = \{ \varphi \in \Aut(G,\leq) : \varphi(e) = e \}$.
\end{pro}

\begin{proof}
    It is well-known that a regular subgroup of a permutation group gives rise to such a factorization, and $L_G \leq \Aut(G,\leq)$ is a regular subgroup.
\end{proof}

\begin{pro} \label{pro:correspondence_regular_subgroups_IG_formations}
    Let $G$ be a left-ordered group. Then the following two assignments are the mutually inverse constituents of a bijective correspondence between equivalence classes of $\IG$-formations and regular subgroups $H \leq \Aut(G,\leq)$:
    \begin{enumerate}
        \item To an $\IG$-formation $(H,\iota)$, assign the regular subgroup ${}^{\iota}L_H \leq \Aut(G,\leq)$.
        \item To a regular subgroup $H \leq \Aut(G,\leq)$, assign the $\IG$-formation $(H',\iota_H)$ where $H' = (H,\leq)$ is the left-ordered group with $\pi \leq \rho \Leftrightarrow \pi(e_G) \leq \rho(e_G)$, and $\iota_H(\pi) = \pi(e_G)$.  
    \end{enumerate}
\end{pro}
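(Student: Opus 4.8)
The plan is to verify that both assignments are well-defined and then that they compose to the identity in each direction; everything reduces to unwinding the definition of the conjugate ${}^{\iota}L_H = \{\iota \circ \sigma \circ \iota^{-1} : \sigma \in L_H\}$ of the regular subgroup $L_H \leq \Sym_H$ transported along the bijection $\iota$. I would first check that assignment (1) lands in the regular subgroups of $\Aut(G,\leq)$ and depends only on the equivalence class. Each element of ${}^{\iota}L_H$ has the form $\iota \circ l_h \circ \iota^{-1}$ with $h \in H$; since $\iota$ and $l_h$ are order-isomorphisms (the latter by left-invariance of $H$), this is an order-automorphism of $G$, so ${}^{\iota}L_H \leq \Aut(G,\leq)$, and regularity is transported from the regular action of $L_H$ on $H$ through $\iota$. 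For independence of the representative, if $f \colon H \to H'$ is an isomorphism of left-ordered groups with $\iota' \circ f = \iota$, then $l_{f(h)} = f \circ l_h \circ f^{-1}$ gives $L_{H'} = {}^{f}L_H$, whence ${}^{\iota'}L_{H'} = {}^{\iota' \circ f}L_H = {}^{\iota}L_H$.

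Next I would check that assignment (2) produces a genuine $\IG$-formation. The map $\pi \mapsto \pi(e_G)$ is a bijection $H \to G$ by regularity, so the relation $\pi \leq \rho \Leftrightarrow \pi(e_G) \leq \rho(e_G)$ is a partial order on $H$, making $\iota_H$ an order-isomorphism by construction with $\iota_H(\mathrm{id}_G) = e_G$. Left-invariance of this order on $H' = (H,\leq)$ is precisely the statement that each $\pi \in H$ is order-preserving on $G$: if $\rho(e_G) \leq \tau(e_G)$ then $(\pi\rho)(e_G) = \pi(\rho(e_G)) \leq \pi(\tau(e_G)) = (\pi\tau)(e_G)$, where the group law on $H'$ is composition of permutations.

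Finally I would verify the two composites. Starting from a regular subgroup $H$, assignment (2) yields $(G,H',\iota_H)$, and for $\rho \in H'$, $g \in G$ one computes ${}^{\iota_H}l_\rho(g) = \iota_H\big(\rho \circ \iota_H^{-1}(g)\big) = \big(\rho \circ \iota_H^{-1}(g)\big)(e_G) = \rho(g)$, using $\iota_H^{-1}(g)(e_G) = g$; hence ${}^{\iota_H}L_{H'} = H$ on the nose, so (2) followed by (1) is the identity. Conversely, starting from a formation $(G,H,\iota)$, assignment (1) gives $K = {}^{\iota}L_H$ and then (2) gives $(G,K',\iota_K)$ with $K' = (K,\leq)$; I would exhibit the equivalence $f \colon H \to K'$, $h \mapsto {}^{\iota}l_h$. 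This is a group isomorphism, being the composition of the homomorphisms $h \mapsto l_h$ and conjugation by $\iota$, and the identity $\iota_K\big(f(h)\big) = ({}^{\iota}l_h)(e_G) = \iota\big(l_h(e_H)\big) = \iota(h)$ — using $\iota^{-1}(e_G) = e_H$ — shows at once that $\iota_K \circ f = \iota$ and, as $\iota$ is an order-isomorphism, that $f$ respects the orders. Thus $f$ witnesses $(G,K',\iota_K) \sim (G,H,\iota)$.

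None of these steps is a serious obstacle; the only thing requiring care is the bookkeeping around the twisted conjugate ${}^{\iota}L_H$, where $\iota$ is a bijection between the two distinct underlying sets $H$ and $G$ rather than an inner conjugation, together with keeping the composition law on $H'$ straight. I expect the computation ${}^{\iota_H}l_\rho = \rho$ to be the linchpin, since it is what makes assignment (2) followed by assignment (1) literally the identity on regular subgroups rather than merely an equivalence, while the reverse composite is necessarily only an equivalence of formations, realized by $f$.
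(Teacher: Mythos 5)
Your proposal is correct and takes essentially the same route as the paper's proof: the same well-definedness checks for both assignments (including the conjugation identity ${}^{\iota'}L_{H'} = {}^{\iota'\circ f}L_{H} = {}^{\iota}L_{H}$ for independence of the representative), the same computation ${}^{\iota_H}l_{\rho} = \rho$ showing that assignment (2) followed by (1) returns the regular subgroup on the nose, and the same equivalence $f\colon h \mapsto {}^{\iota}l_h$ for the other composite. The only cosmetic difference is that you deduce order-preservation of $f$ from $\iota_K \circ f = \iota$ with $\iota_K$, $\iota$ order-isomorphisms, where the paper verifies the chain of equivalences directly; the content is identical.
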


\begin{proof}
    We prove that the first assignment is well-defined: given a left-ordered group $H$, we see that $L_H \leq \Aut(H,\leq)$ by definition, and regularity is obvious. Now $\iota$ is an isomorphism between ordered sets, so ${}^{\iota}L_H$ is indeed a subgroup of $\Aut(G,\leq)$, and it is regular as $\iota$ is bijective. Given two $\IG$-formations $(H_i,\iota_i)$ ($i = 1,2$), that are equivalent via the isomorphism $f: H_1 \overset{\sim}{\to} H_2$, we obtain that ${}^{\iota_1}L_{H_1} = {}^{\iota_2 \circ f}( {}^{f^{-1}}L_{H_2} ) = {}^{\iota_2}L_{H_2}$, therefore the constructed regular subgroup is independent of the choice of a representative.

    Now given a regular subgroup $H \leq \Aut(G,\leq)$, we need to show that $\pi \leq \rho \Leftrightarrow \pi(e) \leq \rho(e)$ ($\pi,\rho \in H$) indeed defines a left-ordered group. But this is clear as $\rho_1(e) \leq \rho_2(e)$ implies $(\pi \circ \rho_1)(e) \leq (\pi \circ \rho_2)(e)$ for $\pi,\rho_1,\rho_2 \in \Aut(G,\leq)$, simply by the definition of an order-automorphism. Furthermore, the mapping $\iota_H: H' \to G$; $\pi \mapsto \pi(e_G)$ is an isomorphism of ordered sets by the definition of $H'$.

    We are left with proving that these assignments are bijective: first, let $(H,\iota)$ be an $\IG$-formation. We have to show that $({}^{\iota}L_H,\iota')$ with $\iota'(\pi) = \pi(e)$ is equivalent to $(H,\iota)$: it is clear that $f: H \to {}^{\iota}L_H$; $h \mapsto {}^{\iota}l_h$ is an isomorphism of groups. On the other hand, for $h,h' \in H$ we have the chain of equivalences:
    \[
    h \leq h' \Leftrightarrow \iota(h) \leq \iota(h') \Leftrightarrow ({}^{\iota}l_h)(e_G) \leq ({}^{\iota}l_{h'})(e_G) \Leftrightarrow f(h) \leq f(h').
    \]
    Furthermore, $(\iota' \circ f)(h) = {}^{\iota}l_h(e_G) = \iota(h)$, which proves equivalence.
    
    On the other hand, if $H \leq \Aut(G,\leq)$ is a regular subgroup, we only have to show that with the map $\iota_H: H \to G$; $\pi \mapsto \pi(e)$, we have ${}^{\iota_H}L_H = H$. But this is easily checked: for $g \in G$, $\pi \in H$, pick $\rho \in H$ with $\iota_H(\rho) = g$. With this choice, we have
    \[
    ({}^{\iota_H}l_{\pi})(g) = \iota_H (\pi \circ \iota_H^{-1}(g)) =  \iota_H (\pi \circ \rho) = (\pi \circ \rho)(e) = \pi(g).
    \]
\end{proof}

We will now prove that under reasonable obstructions, a bi-rigid left $\ell$-group admits only finitely many $\IG$-formations.

\begin{thm} \label{thm:finiteness_theorem}
    Let $G$ be a bi-rigid left $\ell$-group such that $\At(G)$ is finite. Then up to equivalence, there are only finitely many $\IG$-formations.
\end{thm}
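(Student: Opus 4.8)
The plan is to translate the statement, via \cref{pro:correspondence_regular_subgroups_IG_formations}, into a counting problem for regular subgroups of $\mathcal{A} := \Aut(G,\leq)$, and then to solve that problem by combining the finiteness of the point stabilizer $\Aut(G,\leq)_e$ with the finite generation of $\mathcal{A}$. The first task is to verify the hypothesis of \cref{thm:rigidity_of_restriction_to_atoms}, namely that $\bigwedge \Atd(G)$ exists. Since each left translation $\lambda_g$ is an order-automorphism, it preserves the covering relation, and so the involution $g \mapsto g^{-1}$ sends the cover $e \prec x$ to the cover $x^{-1} \prec e$; hence it restricts to a bijection $\At(G) \to \Atd(G)$. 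Consequently $\Atd(G)$ is finite, and being a finite subset of the lattice $G$ it has a meet. \cref{thm:rigidity_of_restriction_to_atoms} therefore applies and shows that $\rho\colon \Aut(G,\leq)_e \to \Sym_{\At(G)}$ is injective; as $\At(G)$ is finite, $\Sym_{\At(G)}$ is finite, so $n := |\Aut(G,\leq)_e|$ is finite.

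Next I would record that $\mathcal{A}$ is finitely generated and that $L_G$ has index $n$ in it. By \cref{pro:factorizing_automorphism_groups} we have $\mathcal{A} = L_G \cdot \Aut(G,\leq)_e$ with $L_G \cap \Aut(G,\leq)_e = \{\id_G\}$, so $L_G$ has exactly $n$ right cosets in $\mathcal{A}$ (distinct elements of $\Aut(G,\leq)_e$ give distinct cosets, by the trivial intersection). By \cref{pro:noetherian_l_groups_generated_by_atoms}, $G$—and hence $L_G \cong G$—is generated by the finite set $\At(G)$, so $L_G$ is finitely generated; since a group containing a finitely generated subgroup of finite index is itself finitely generated, $\mathcal{A}$ is finitely generated.

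Finally I would count the regular subgroups. Let $K := \Aut(G,\leq)_e$, which is precisely the stabilizer of $e$ for the action of $\mathcal{A}$ on $G$. For any regular subgroup $H \leq \mathcal{A}$, simple transitivity forces $H \cap K = \{\id_G\}$, while transitivity of $H$ gives $HK = \mathcal{A}$ (for $a \in \mathcal{A}$ choose $h \in H$ with $h(e) = a(e)$, so $h^{-1}a \in K$); hence $H$ meets each coset $Hk$ exactly once and $[\mathcal{A} : H] = |K| = n$. Thus \emph{every} regular subgroup has the same finite index $n$ in the finitely generated group $\mathcal{A}$. By the classical fact that a finitely generated group admits only finitely many subgroups of any given finite index, $\mathcal{A}$ has only finitely many regular subgroups, and \cref{pro:correspondence_regular_subgroups_IG_formations} then yields only finitely many $\IG$-formations up to equivalence.

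The only genuinely delicate step is securing the finiteness of $\Aut(G,\leq)_e$: this rests entirely on \cref{thm:rigidity_of_restriction_to_atoms}, whose applicability I must first justify through the existence of $\bigwedge \Atd(G)$ furnished by the inversion bijection $\At(G) \to \Atd(G)$. Once that is in place, the remaining ingredients—the coset count, finite generation, the uniform index $n$ of regular subgroups, and the subgroup-counting theorem—assemble soft and routinely.
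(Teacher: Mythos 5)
Your proposal is correct and follows essentially the same route as the paper: reduce via \cref{pro:correspondence_regular_subgroups_IG_formations} to counting regular subgroups of $\Aut(G,\leq)$, show the stabilizer $\Aut(G,\leq)_e$ is finite using \cref{thm:rigidity_of_restriction_to_atoms}, deduce finite generation of $\Aut(G,\leq)$ from \cref{pro:noetherian_l_groups_generated_by_atoms} and \cref{pro:factorizing_automorphism_groups}, and conclude with the classical fact that a finitely generated group has finitely many subgroups of a fixed finite index. You merely make explicit two points the paper leaves implicit --- the inversion bijection $\At(G) \to \Atd(G)$ guaranteeing that $\bigwedge \Atd(G)$ exists, and the verification that every regular subgroup has index exactly $|\Aut(G,\leq)_e|$ --- both of which are correct.
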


\begin{proof}
    Note that $\bigwedge \Atd(G)$ exists as $|\Atd(G)|= |\At(G)| < \infty$. As $\Sym_{\At(G)}$ is finite, \cref{thm:rigidity_of_restriction_to_atoms} shows that $\Aut(G,\leq)_e$ is finite. Furthermore, by \cref{pro:noetherian_l_groups_generated_by_atoms}, $G$ is finitely generated. As $\Aut(G,\leq) = L_G \cdot \Aut(G,\leq)_e$ (\cref{pro:factorizing_automorphism_groups}), we see that $(\Aut(G,\leq):L_G) = d < \infty$ where $d = |\Aut(G,\leq)_e|$. As $L_G \cong G$ is finitely generated, it follows that $\Aut(G,\leq)$ is finitely generated.

    By \cref{pro:correspondence_regular_subgroups_IG_formations}, equivalence classes of $\IG$-formations are in bijective correspondence with regular subgroups of $\Aut(G,\leq)$. As $\Aut(G,\leq)$ is finitely generated, and each regular subgroup is of index $d$ in $\Aut(G,\leq)$, it follows that there can only be finitely many equivalence classes of $\IG$-formations.
\end{proof}

\section{Groups of \texorpdfstring{$\IGam$}{IGam}-type for spherical Artin-Tits groups} \label{sec:IG_for_AT}

We now solve the original problem of Dehornoy et al. by providing a characterization of $\IGam$-formations whenever $\Gamma$ is a spherical Artin-Tits group. In order to achieve this, we make use of the rigidity of spherical Artin-Tits groups. We start by determining their order-automorphisms:

\begin{pro} \label{bi-rigidity_of_at_groups}
    Let $\Gamma$ be an Artin-Tits group of spherical type and let $\varphi \in \Aut(\Gamma,\leq)$ be an order-automorphism with $\varphi(e) = e$. Then $\varphi$ is a diagram automorphism.
\end{pro}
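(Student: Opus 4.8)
The plan is to reduce the statement to the action $\varphi$ induces on the atoms and then to recognise that action as a symmetry of the Coxeter matrix. By \cref{thm:brieskorn_saito}, $\Gamma$ is a noetherian left $\ell$-group, and its atoms are exactly the standard generators, $\At(\Gamma) = \{\sigma_i : i \in S\}$, since by \cref{pro:height_is_length_of_minimal_factorization} the height-one positive elements are precisely the indecomposable elements of $\Gamma^+$. In particular $\At(\Gamma)$ is finite and $\bigwedge \Atd(\Gamma)$ exists. The first task is to verify that $\Gamma$ is bi-rigid, so that \cref{thm:rigidity_of_restriction_to_atoms} applies and $\rho \colon \Aut(\Gamma,\leq)_e \to \Sym_{\At(\Gamma)}$ is injective; the whole argument then comes down to matching $\varphi$ with a diagram automorphism on atoms.

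To check rigidity I work inside the rank-two parabolics. For $i \neq j$ the join $\sigma_i \vee \sigma_j$ is the left-lcm of $\sigma_i$ and $\sigma_j$ in $\Gamma^+$, namely the alternating word $r_{m_{ij}}(\sigma_i,\sigma_j) = r_{m_{ij}}(\sigma_j,\sigma_i)$; it is square-free and, being a product of $m_{ij}$ atoms, satisfies $H(\sigma_i\vee\sigma_j) = m_{ij}$. Its left divisors form the two maximal chains of the dihedral interval $[e,\sigma_i\vee\sigma_j]$, so the only atom $z$ with $\sigma_i z \leq \sigma_i \vee \sigma_j$ is $z = \sigma_j$, giving condition (1); and $\sigma_i z \not\leq \sigma_i \vee \sigma_j$ for every $j$ happens exactly when $z = \sigma_i$, since $\sigma_i^2$ divides no square-free join while for $z = \sigma_k$ with $k \neq i$ one takes $j = k$, giving condition (2). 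Dual rigidity then requires no new combinatorics: word reversal defines an anti-automorphism $\tau$ of $\Gamma$ fixing each $\sigma_i$ (the braid relations are palindromic), so $\psi \colon g \mapsto \tau(g^{-1})$ is a \emph{group} automorphism sending each $\sigma_i$ to $\sigma_i^{-1}$, hence $\Gamma^+$ to $\Gamma^-$. A direct check shows $\psi$ is an isomorphism of left-ordered groups $(\Gamma,\leq) \overset{\sim}{\to} \Gamma^{\mathrm{op}}$, so $\Gamma^{\mathrm{op}}$ is rigid as well and $\Gamma$ is bi-rigid.

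Granting bi-rigidity, write $\varphi(\sigma_i) = \sigma_{\phi(i)}$ for the permutation $\phi \in \Sym_S$ induced on the atoms. Since $\varphi$ is an order-automorphism with $\varphi(e)=e$, it preserves binary joins and relative heights, so
\[
m_{\phi(i)\phi(j)} = H(\sigma_{\phi(i)} \vee \sigma_{\phi(j)}) = H(\varphi(\sigma_i \vee \sigma_j)) = H(\sigma_i \vee \sigma_j) = m_{ij}.
\]
Hence $\phi$ is a symmetry of the Coxeter matrix, the diagram automorphism $\delta_{\phi} \in \mathfrak{D}_{\Gamma}$ is defined, and by \cref{pro:diagram_automorphisms_are_order_automorphisms} it lies in $\Aut(\Gamma,\leq)_e$. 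As $\delta_{\phi}$ and $\varphi$ agree on $\At(\Gamma)$, injectivity of $\rho$ (\cref{thm:rigidity_of_restriction_to_atoms}) forces $\varphi = \delta_{\phi}$, so $\varphi$ is a diagram automorphism.

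I expect the genuinely load-bearing step to be the rigidity verification, that is, the explicit description of $\sigma_i \vee \sigma_j$ as a square-free alternating word whose divisor poset is a dihedral interval; once this rank-two picture is in place, both rigidity conditions, the identity $H(\sigma_i \vee \sigma_j) = m_{ij}$, and the concluding height computation are essentially formal. The facts $\At(\Gamma) = \{\sigma_i\}$ and $H(\sigma_i \vee \sigma_j) = m_{ij}$ should be recorded explicitly before the computation, as they rest on the structure of least common multiples in spherical Artin-Tits monoids.
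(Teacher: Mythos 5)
Your proposal is correct and follows essentially the same route as the paper's proof: identify $\At(\Gamma)=\{\sigma_i : i \in S\}$, verify rigidity inside the rank-two intervals $[e,r_{m_{ij}}(\sigma_i,\sigma_j)]$, invoke \cref{thm:rigidity_of_restriction_to_atoms}, and recover the Coxeter matrix from an order-invariant of the join so that $\varphi$ must coincide with $\delta_\phi$. The only (harmless) variations are that you use the relative height $H(\sigma_i\vee\sigma_j)=m_{ij}$ where the paper counts $|[e,\sigma_i\vee\sigma_j]|=2m_{ij}$, and that your explicit reversal anti-automorphism $\tau$ (with $\psi(g)=\tau(g^{-1})$) spells out rigorously what the paper disposes of with the one-line remark ``due to the symmetry of the relations'' for dual rigidity.
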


\begin{proof}
    By \cref{thm:brieskorn_saito}, $\Gamma$ is a noetherian left $\ell$-group with $\At(G) = \{ \sigma_i : i \in S \}$.

    We prove that $\Gamma$ is rigid: let $i,j \in S$, then thanks to \cref{thm:brieskorn_saito}, $\sigma_i \vee \sigma_j \in \Gamma$ can be determined with respect to $\Gamma^+$. It is clear that $\sigma_i \vee \sigma_j \leq r_{m_{ij}}(\sigma_i,\sigma_j) =: r$. As $r_{m_{ij}}(\sigma_i,\sigma_j)$ and $r_{m_{ij}}(\sigma_j,\sigma_i)$ are the only expressions representing $r$ in $\Gamma^+$, it follows that
    \begin{align}
     \{ r_k(\sigma_i,\sigma_j) : 0 \leq k \leq m_{ij} \} \cup \{ r_k(\sigma_j,\sigma_i) : 0 \leq k \leq m_{ij} \} & = [e,r], \label{eq:interval_e_r} \\
     \{ r_k(\sigma_i,\sigma_j) : 0 \leq k \leq m_{ij} \} \cap \{ r_k(\sigma_j,\sigma_i) : 0 \leq k \leq m_{ij} \} & = \{e,r \} \label{eq:interval_cut} 
    \end{align}
    We observe that any $g \in [e,r] \setminus \{ r \}$ has a unique expression as a positive word in the generators $\sigma_i,\sigma_j$, which implies that either $g \not\geq \sigma_i$ or $g \not\geq \sigma_j$. Therefore, $\sigma_i \vee \sigma_j = r$.
    
    Furthermore, this observation, together with \eqref{eq:interval_e_r} shows that $x = \sigma_j$ is the unique $x \in \At(G)$ with $\sigma_ix \leq \sigma_i \vee \sigma_j$. Letting $j$ vary through $S$, we also observe that, given $i \in S$, the atom $x = \sigma_i$ is unique with the property that $\sigma_ix \not\leq \sigma_i \vee \sigma_j$ for any $j \in S$. Therefore, $\Gamma$ is a rigid left $\ell$-group.

    Due to the symmetry of the relations defining $\Gamma$, it follows that $\Gamma$ is also dually rigid, so $\Gamma$ is bi-rigid. 

    By \cref{thm:rigidity_of_restriction_to_atoms}, it follows that each order-automorphism $\varphi: \Gamma \to \Gamma$ with $\varphi(e)$ is determined by its restriction to $\At(\Gamma) = \{ \sigma_i : i \in S \}$. Let $\varphi$ be such an order-automorphism, then there is a $\phi \in \Sym_n$ with $\varphi(\sigma_i) = \sigma_{\phi(i)}$ ($i \in S$).

    Note that $|[e,\sigma_i \vee \sigma_j]| = 2m_{ij}$ for all $i,j \in S$, which follows from \eqref{eq:interval_e_r} and \eqref{eq:interval_cut} and the discussion thereafter, so
    \[
    2m_{\phi(i)\phi(j)} = |[e,\sigma_{\phi(i)} \vee \sigma_{\phi(j)}]| = |[e,\varphi(\sigma_i) \vee \varphi(\sigma_j)]| = |[e,\sigma_i \vee \sigma_j]| = 2m_{ij}.
    \]
    Therefore, $\delta_{\phi}$ is a diagram automorphism of $\Gamma$ and also, an automorphism of the ordered set $(\Gamma,\leq)$ (\cref{pro:diagram_automorphisms_are_order_automorphisms}).
    As $\varphi(\sigma_i) = \delta_{\phi}(\sigma_i)$ for all $i \in S$, it follows from \cref{thm:rigidity_of_restriction_to_atoms} that $\varphi = \delta_{\phi}$, so $\varphi$ is a diagram automorphism.
\end{proof}

\begin{thm} \label{thm:order_automorphism_of_at_groups}
    Let $\Gamma$ be an Artin-Tits group of spherical type and let $\mathfrak{D}_{\Gamma}$ be the group of diagram automorphisms of $G$, then
    \[
    \Aut(\Gamma,\leq) = L_{\Gamma} \rtimes \mathfrak{D}_{\Gamma} \leq \Hol(\Gamma).
    \]
\end{thm}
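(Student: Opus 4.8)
The plan is to combine the abstract factorization of \cref{pro:factorizing_automorphism_groups} with the identification of the point stabilizer already obtained in \cref{bi-rigidity_of_at_groups}. First I would pin down the stabilizer of the identity, namely that $\Aut(\Gamma,\leq)_e = \mathfrak{D}_{\Gamma}$. The inclusion $\mathfrak{D}_{\Gamma} \subseteq \Aut(\Gamma,\leq)_e$ holds because every diagram automorphism is an order-automorphism (\cref{pro:diagram_automorphisms_are_order_automorphisms}) fixing $e$; the reverse inclusion is precisely the content of \cref{bi-rigidity_of_at_groups}, which says that every order-automorphism fixing $e$ is a diagram automorphism.

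With this identification in hand, \cref{pro:factorizing_automorphism_groups} immediately yields the set-theoretic factorization
\[
\Aut(\Gamma,\leq) = L_{\Gamma} \cdot \mathfrak{D}_{\Gamma}, \qquad L_{\Gamma} \cap \mathfrak{D}_{\Gamma} = \{\id_{\Gamma}\}.
\]
To upgrade this to an internal semidirect product $L_{\Gamma} \rtimes \mathfrak{D}_{\Gamma}$, the remaining point is to verify that $L_{\Gamma}$ is normal in $\Aut(\Gamma,\leq)$. This is exactly where I would use that diagram automorphisms are genuine \emph{group} automorphisms of $\Gamma$, not merely order-automorphisms: for any $\alpha \in \Aut(\Gamma)$ and $g \in \Gamma$ one has the conjugation identity $\alpha \circ l_g \circ \alpha^{-1} = l_{\alpha(g)}$, so $\mathfrak{D}_{\Gamma}$ normalizes $L_{\Gamma}$. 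Since $L_{\Gamma}$ normalizes itself and the whole group is the product $L_{\Gamma} \cdot \mathfrak{D}_{\Gamma}$, it follows that $\Aut(\Gamma,\leq)$ normalizes $L_{\Gamma}$, giving normality and hence the internal semidirect decomposition.

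Finally, the inclusion into the holomorph is essentially a reformulation of this normality. Since $\Hol(\Gamma) = N_{\Sym_{\Gamma}}(L_{\Gamma})$ by definition, the normality of $L_{\Gamma}$ in $\Aut(\Gamma,\leq)$ shows $\Aut(\Gamma,\leq) \leq \Hol(\Gamma)$; equivalently, $\mathfrak{D}_{\Gamma} \subseteq \Aut(\Gamma) \subseteq \Hol(\Gamma)$ and $L_{\Gamma} \subseteq \Hol(\Gamma)$, so their product lies in $\Hol(\Gamma)$, and the resulting decomposition is compatible with the standard factorization $\Hol(\Gamma) = L_{\Gamma} \rtimes \Aut(\Gamma)$. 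I do not expect a serious obstacle in this theorem: the genuinely hard work has already been carried out in \cref{bi-rigidity_of_at_groups} (via the rigidity machinery), and what remains is the routine conjugation formula for left translations together with the abstract consequence that a point-regular subgroup yields a complemented factorization. The one subtlety worth stating carefully is to keep the distinction between order-automorphisms and group automorphisms explicit, so that the normality step legitimately invokes the fact that elements of $\mathfrak{D}_{\Gamma}$ are group homomorphisms.
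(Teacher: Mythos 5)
Your proposal is correct and follows essentially the same route as the paper: identify the stabilizer $\Aut(\Gamma,\leq)_e$ with $\mathfrak{D}_{\Gamma}$ via \cref{bi-rigidity_of_at_groups} (together with \cref{pro:diagram_automorphisms_are_order_automorphisms}), invoke the factorization of \cref{pro:factorizing_automorphism_groups}, and use that diagram automorphisms are group automorphisms, hence normalize $L_{\Gamma}$, to obtain the semidirect product inside $\Hol(\Gamma)$. Your write-up is in fact slightly more careful than the paper's, which leaves the conjugation identity $\alpha \circ l_g \circ \alpha^{-1} = l_{\alpha(g)}$ and the holomorph inclusion implicit (and whose citation of the stabilizer identification contains a self-referential typo that should point to \cref{bi-rigidity_of_at_groups}).
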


\begin{proof}
    It follows from \cref{pro:factorizing_automorphism_groups} that there is a factorization $\Aut(\Gamma,\leq) = L_{\Gamma} \cdot \Aut(\Gamma,\leq)_e$. By \cref{thm:order_automorphism_of_at_groups}, $\Aut(\Gamma,\leq)_e = \mathfrak{D}_{\Gamma}$, which normalizes $L_{\Gamma}$, therefore this factorization is a semidirect product.
\end{proof}

\begin{thm} \label{thm:IGamma_groups_are_skew_braces}
    Let $(\Gamma,+)$ be an Artin-Tits group of spherical type, written additively. Then the following two assignments are the mutually inverse constituents of a bijective correspondence between the equivalence classes of $\IGam$-formations and skew brace structures $(\Gamma,+,\circ)$ with $\mathrm{im}(\lambda) \leq \mathfrak{D}_{\Gamma}$:

    \begin{enumerate}
        \item To an $\IGam$-formation $(H,\iota)$, assign the skew brace $(\Gamma,+,\subcirc{\iota})$ where
        \[
        g \circ' h = \iota(\iota^{-1}(g) \circ \iota^{-1}(h))
        \]
        \item To a skew brace structure $(\Gamma,+,\circ)$ with $\mathrm{im}(\lambda) \leq \mathfrak{D}_{\Gamma}$, assign the $\IGam$-formation given by $((\Gamma,\circ),\mathrm{id}_{\Gamma})$ where $(\Gamma,\circ)$ inherits its order from $(\Gamma,+)$.
    \end{enumerate}
\end{thm}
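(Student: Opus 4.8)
The plan is to obtain the correspondence by composing two bijections already established in the paper, and then to check that the resulting maps are literally the two explicit assignments in the statement. First I would invoke \cref{pro:correspondence_regular_subgroups_IG_formations}, which identifies equivalence classes of $\IGam$-formations with regular subgroups $H \leq \Aut(\Gamma,\leq)$. Next, \cref{thm:order_automorphism_of_at_groups} gives $\Aut(\Gamma,\leq) = L_{\Gamma} \rtimes \mathfrak{D}_{\Gamma}$, and under the canonical identification $L_{\Gamma} \cong (\Gamma,+)$ this is exactly the group $\Gamma \rtimes \mathfrak{D}_{\Gamma}$ sitting inside $\Hol(\Gamma,+)$. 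Finally, \cref{pro:restrict_lambda}, applied with $A = \mathfrak{D}_{\Gamma}$, identifies the regular subgroups of $\Gamma \rtimes \mathfrak{D}_{\Gamma}$ with the skew brace structures $(\Gamma,+,\circ)$ whose $\lambda$-image lies in $\mathfrak{D}_{\Gamma}$. Composing the three identifications yields a bijection of the desired type; what remains is to verify that it is given by the two stated assignments, and the skew brace axioms themselves come for free from the cited correspondences.

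For the forward direction, I would take an $\IGam$-formation $(\Gamma,H,\iota)$, pass to the regular subgroup ${}^{\iota}L_H$ produced by \cref{pro:correspondence_regular_subgroups_IG_formations}, and read off the skew brace via the first assignment of \cref{thm:correspondence_regular_subgroups_and_skew_braces}. A generic element of ${}^{\iota}L_H$ is $\pi = \iota \circ l_k \circ \iota^{-1}$ with $k \in H$ (here $l_k$ is left translation in $H$ under its multiplicative operation $\circ$). Computing $\pi(e_{\Gamma}) = \iota(k)$ and $\pi(h) = \iota(k \circ \iota^{-1}(h))$, the defining rule $\pi(e_{\Gamma}) \circ' h = \pi(h)$ becomes, after setting $g = \iota(k)$ so that $k = \iota^{-1}(g)$, precisely $g \circ' h = \iota(\iota^{-1}(g) \circ \iota^{-1}(h))$, matching the statement.

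For the reverse direction, I would start from a skew brace $(\Gamma,+,\circ)$ with $\mathrm{im}(\lambda) \leq \mathfrak{D}_{\Gamma}$. The one point that is not purely formal is that one must check $(\Gamma,\circ)$, equipped with the order inherited from $(\Gamma,+)$, really is a left-ordered group; this is where the hypothesis is used. Indeed, for $g,y,z \in \Gamma$ one has $g \circ y = g + \lambda_g(y)$, and since $\lambda_g \in \mathfrak{D}_{\Gamma} \leq \Aut(\Gamma,\leq)$ by \cref{pro:diagram_automorphisms_are_order_automorphisms}, the map $y \mapsto g \circ y$ preserves $\leq$, giving $\circ$-left-invariance. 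Then $\mathrm{id}_{\Gamma}\colon (\Gamma,\circ) \to (\Gamma,+)$ is an order-isomorphism fixing $e$ (the two structures share their identity), so $((\Gamma,+),(\Gamma,\circ),\mathrm{id}_{\Gamma})$ is a genuine $\IGam$-formation. Tracing it through \cref{pro:restrict_lambda} and the second assignment of \cref{thm:correspondence_regular_subgroups_and_skew_braces} recovers the regular subgroup $L_{(\Gamma,\circ)}$, and the isomorphism $g \mapsto l^{\circ}_g$ witnesses that the formation built by \cref{pro:correspondence_regular_subgroups_IG_formations} from this subgroup is equivalent to $((\Gamma,+),(\Gamma,\circ),\mathrm{id}_{\Gamma})$.

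The main obstacle is essentially bookkeeping: keeping the additive and the $\circ$-structures distinct throughout the chain of correspondences, and confirming that the two explicit assignments are genuinely mutually inverse rather than merely constituting some bijection. The conceptual content is already guaranteed by the results cited above; the hypothesis $\mathrm{im}(\lambda) \leq \mathfrak{D}_{\Gamma}$ does double duty, both placing the associated regular subgroup inside $\Aut(\Gamma,\leq) = L_{\Gamma} \rtimes \mathfrak{D}_{\Gamma}$ and making the transported multiplication order-compatible so that $(\Gamma,\circ)$ is a left-ordered group.
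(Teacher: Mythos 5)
Your proposal is correct and follows essentially the same route as the paper: the paper's proof likewise chains \cref{pro:correspondence_regular_subgroups_IG_formations}, \cref{thm:order_automorphism_of_at_groups} and \cref{pro:restrict_lambda} to obtain the correspondence. The explicit verifications you add --- computing $\pi(e_\Gamma) \subcirc{\iota} h = \pi(h)$ for $\pi \in {}^{\iota}L_H$, and checking via $\lambda_g \in \mathfrak{D}_{\Gamma} \leq \Aut(\Gamma,\leq)$ that $(\Gamma,\circ)$ is left-ordered --- are details the paper leaves implicit, and they are carried out correctly.
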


\begin{proof}
    By \cref{pro:correspondence_regular_subgroups_IG_formations}, each equivalence class of $\IGam$-formations corresponds to a regular subgroup of $\Aut(\Gamma,\leq)$. By \cref{thm:order_automorphism_of_at_groups}, $\Aut(\Gamma,\leq) = L_{\Gamma} \rtimes \mathfrak{D}_{\Gamma} \leq \Hol(\Gamma,+)$, so the correspondence is established by combining \cref{pro:restrict_lambda} and \cref{pro:correspondence_regular_subgroups_IG_formations}.
\end{proof}

By means of \cref{thm:IGamma_groups_are_skew_braces}, the classification problem for $\IGam$-structures for a spherical Artin-Tits group $\Gamma$ translates to the problem of classifying skew brace structures on $\Gamma$ whose $\lambda$-maps are diagram automorphisms. For $\Gamma = \Z^n$, this problem is equivalent to the classification of finite involutive non-degenerate set-theoretic solutions to the Yang--Baxter equation \cite{GIVdB_IType,JO_IType}, so a full solution of the classification problem for general spherical Artin-Tits groups is unlikely. However, in the following, we will see that a full classification of $\IGam$-formations is possible when $\Gamma$ is an \emph{irreducible} Artin-Tits group!

\begin{pro} \label{pro:at_groups_with_non_trivial_IGam_groups}
    Let $\Gamma$ be an \emph{irreducible} spherical Artin-Tits group. If there exist non-trivial $\IGam$-formations, then $\Gamma$ is of type $A_n$ ($n \geq 2$), $D_n$ ($n \geq 4$), $E_6$, $F_4$ or $I_n$ ($n \geq 4$).
\end{pro}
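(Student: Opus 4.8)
The plan is to translate the statement entirely into the language of skew braces via \cref{thm:IGamma_groups_are_skew_braces} and then reduce it to the non-triviality of the diagram automorphism group $\mathfrak{D}_{\Gamma}$. By \cref{thm:IGamma_groups_are_skew_braces}, equivalence classes of $\IGam$-formations are in bijection with skew brace structures $(\Gamma,+,\circ)$ satisfying $\mathrm{im}(\lambda) \leq \mathfrak{D}_{\Gamma}$. The first thing to pin down is that this bijection sends the \emph{trivial} formation to the \emph{trivial} brace: by the observation following \cref{pro:restrict_lambda} the trivial brace $(\Gamma,+,+)$ corresponds to the regular subgroup $L_{\Gamma}$, and by \cref{pro:correspondence_regular_subgroups_IG_formations} the regular subgroup $L_{\Gamma}$ is exactly the one attached to the trivial $\IGam$-formation $(\Gamma,\Gamma,\mathrm{id}_{\Gamma})$. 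Hence a non-trivial $\IGam$-formation corresponds precisely to a non-trivial skew brace structure with $\mathrm{im}(\lambda) \leq \mathfrak{D}_{\Gamma}$.

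Next I would use the recollection made in \cref{sec:prelim} that a skew brace is trivial if and only if its $\lambda$-action is trivial. Applying this, a non-trivial skew brace with $\mathrm{im}(\lambda) \leq \mathfrak{D}_{\Gamma}$ forces $\{\mathrm{id}_{\Gamma}\} \neq \mathrm{im}(\lambda) \leq \mathfrak{D}_{\Gamma}$, and in particular $\mathfrak{D}_{\Gamma} \neq \{\mathrm{id}_{\Gamma}\}$. Thus the existence of a non-trivial $\IGam$-formation implies that $\Gamma$ admits a non-trivial diagram automorphism. Conversely note that this is the crux of the reduction: the entire content of the proposition now rests on determining for which irreducible spherical types $\mathfrak{D}_{\Gamma}$ can fail to be trivial.

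Finally I would appeal to the classification of Coxeter graphs of irreducible spherical type. Since $\mathfrak{D}_{\Gamma}$ is by definition the group of label-preserving symmetries of the Coxeter graph of $\Gamma$, it is non-trivial exactly for the graphs of types $A_n$ ($n \geq 2$), $D_n$ ($n \geq 4$), $E_6$, $F_4$ and $I_n$ ($n \geq 4$), which is precisely the list recorded in \cref{sec:prelim} (the remaining irreducible spherical types $A_1$, $B_n$, $E_7$, $E_8$, $H_3$, $H_4$ have Coxeter graphs with no non-trivial symmetry, e.g.\ because a label $4$ or $5$ or a trivalent node is attached asymmetrically). Combining this with the previous paragraph yields the claimed restriction on $\Gamma$.

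There is no serious analytic obstacle here; the proof is essentially a chain of translations. The only point genuinely requiring care is the compatibility of the correspondence of \cref{thm:IGamma_groups_are_skew_braces} with triviality, i.e.\ verifying that \emph{trivial formation} matches \emph{trivial brace} and not merely that both notions make sense. Once this matching is established, the statement follows immediately from the tabulated classification of diagram symmetries, with the genuinely interesting work — exhibiting the non-trivial formations for each listed type — deferred to the subsequent case analysis.
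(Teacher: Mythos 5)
Your argument is correct and is essentially the paper's own proof: the paper reduces the statement to the non-triviality of $\mathfrak{D}_{\Gamma}$ directly via \cref{thm:order_automorphism_of_at_groups} (if $\mathfrak{D}_{\Gamma}$ is trivial, then $\Aut(\Gamma,\leq) = L_{\Gamma}$, so every $\IGam$-formation is equivalent to the trivial one) and then inspects the classification of irreducible spherical Coxeter graphs, and your detour through the skew-brace dictionary of \cref{thm:IGamma_groups_are_skew_braces} --- including the triviality-matching you rightly single out as the one point needing care --- is a faithful repackaging of that same reduction. One notational caution only: in your parenthetical list of symmetry-free types, read $B_n$ as $n \geq 3$, since the two-vertex diagram with label $4$ does admit a non-trivial symmetry --- in the paper's convention it is filed under the listed family $I_n$ ($n \geq 4$) as $I_4$ rather than as $B_2$.
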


\begin{proof}
    By \cref{thm:order_automorphism_of_at_groups}, we see that if $\mathfrak{D}_{\Gamma}$ is trivial, then $\Aut(\Gamma,\leq) = L_{\Gamma}$ and each $\IGam$-formation is equivalent to $(\Gamma,\mathrm{id}_{\Gamma})$. Only for the types listed in the statement of the proposition, $\mathfrak{D}_{\Gamma}$ is non-trivial, which is quickly checked by an inspection of the Coxeter-Dynkin diagrams for irreducible spherical Artin-Tits groups (see \cite{Humphreys_Reflection}, for example).
\end{proof}

In the following, we call a Coxeter-Dynkin diagram \emph{oddly laced} if each label is either $2$ or an odd integer. Furthermore, we will call an Artin-Tits group oddly laced if its Coxeter-Dynkin diagram is. Given this notion, we now prove the following lemma:

\begin{lem} \label{lem:oddly_laced_to_abelian}
    Let $\Gamma$ be an oddly laced, irreducible Artin-Tits group with standard generators $\sigma_i$ ($i \in S$) and let $A$ be an abelian group. If $f: \Gamma \to A$ is a group homomorphism, then there is a fixed $a \in A$ such that $f(\sigma_i) = a$ for all $i \in S$.
\end{lem}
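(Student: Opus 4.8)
The plan is to reduce everything to the abelianization of $\Gamma$. Since $A$ is abelian, I only need to understand what the defining braid relations become after applying $f$. I would write $A$ additively and set $a_i = f(\sigma_i)$ for $i \in S$; the goal is then to show that all the $a_i$ coincide.

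The first step is to apply $f$ to each defining relation $r_{m_{ij}}(\sigma_i,\sigma_j) = r_{m_{ij}}(\sigma_j,\sigma_i)$ and evaluate both sides in $A$ using the definition of the braid term $r_k$. Writing $m = m_{ij}$, if $m = 2l$ is even then both sides give $l(a_i + a_j)$, so the relation imposes no condition; if $m = 2l+1$ is odd then the left side gives $l(a_i + a_j) + a_i$ and the right side gives $l(a_i + a_j) + a_j$, so the relation forces $a_i = a_j$. Thus odd labels, and only odd labels, transmit equalities between the $a_i$.

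Next I would translate this into the combinatorics of the Coxeter graph. Its edges are precisely the pairs $\{i,j\}$ with $m_{ij} \geq 3$, equivalently $m_{ij} \neq 2$. Since $\Gamma$ is oddly laced, every such $m_{ij}$ is odd, so the computation above yields $a_i = a_j$ whenever $i$ and $j$ are adjacent. Finally, irreducibility of $\Gamma$ means the Coxeter graph is connected, so any two vertices are joined by a path; propagating the equality $a_i = a_j$ along the edges of such a path shows that all the $a_i$ agree. Taking $a$ to be this common value completes the proof.

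I do not anticipate a genuine obstacle: the only computation is evaluating $r_m$ in an abelian group, which is routine once additive notation is fixed. The single point that needs care is the parity case distinction—one must verify that even labels really contribute no relation (this is why the hypothesis is \emph{oddly} laced and cannot be dropped), and that the equalities coming from the odd-labeled edges together with connectivity suffice to link every pair of generators.
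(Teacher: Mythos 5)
Your proposal is correct and follows essentially the same argument as the paper: apply $f$ to a braid relation with odd label $m=2l+1$ to get $f(\sigma_i)^{l+1}f(\sigma_j)^{l} = f(\sigma_i)^{l}f(\sigma_j)^{l+1}$, hence $f(\sigma_i)=f(\sigma_j)$, and then use connectedness of the Coxeter graph (where, by the oddly laced hypothesis, every edge label is odd) to propagate the equality to all generators. Your additional verification that even labels impose no condition is a harmless extra check that the paper simply sidesteps by noting from the start that equality along odd-labeled edges suffices.
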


\begin{proof}
    As the Coxeter-Dynkin diagram of $\Gamma$ is connected, it is sufficient to show that $f(\sigma_i) = f(\sigma_j)$ whenever $m = m_{ij}$ is odd. In this case, we have
    \begin{align*}
    f(r_m(\sigma_i,\sigma_j)) & = f(r_m(\sigma_j,\sigma_i)) \\
    \Rightarrow r_m(f(\sigma_i),f(\sigma_j)) & = r_m(f(\sigma_j),f(\sigma_i)) \\
    \Rightarrow f(\sigma_i)^{\frac{m+1}{2}}f(\sigma_j)^{\frac{m-1}{2}} & = f(\sigma_i)^{\frac{m-1}{2}}f(\sigma_j)^{\frac{m+1}{2}} \\
    \Rightarrow f(\sigma_i) & = f(\sigma_j).
    \end{align*}
\end{proof}

\begin{pro}  \label{pro:construction_of_irreducible_IGamma_groups}
    If $\Gamma = (\Gamma, +)$ is an \emph{irreducible} Artin-Tits group, written additively, then each $\IGam$-formation is equivalent to exactly one formation of the form $(\Gamma_{\alpha},\iota)$. Here $\Gamma_{\alpha} = (\Gamma,\subcirc{\alpha})$ with $g \subcirc{\alpha} h = g + \alpha_g(h)$ and $\iota(g) = g$, where $\alpha: (\Gamma,+) \to \mathfrak{D}_{\Gamma}$; $g \mapsto \alpha_g$ is a homomorphism satisfying
    \begin{equation} \label{eq:invariance_for_AT_skew_braces}
    \alpha_h = \alpha_{\alpha_g(h)} \ (g,h \in \Gamma).
    \end{equation}
\end{pro}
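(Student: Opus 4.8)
The plan is to read the statement entirely through the dictionaries already established and reduce it to a single nilpotency assertion. By \cref{thm:IGamma_groups_are_skew_braces}, equivalence classes of $\IGam$-formations correspond bijectively to skew brace structures $(\Gamma,+,\circ)$ with $\mathrm{im}(\lambda)\leq\mathfrak{D}_{\Gamma}$, and under this correspondence a brace of the form $\Gamma_\alpha=(\Gamma,+,\subcirc{\alpha})$ is sent to the formation $((\Gamma,+),(\Gamma,\subcirc{\alpha}),\mathrm{id})=(\Gamma,\Gamma_\alpha,\iota)$. By \cref{pro:right_nilpotent_of_degree_2_construction} such a $\Gamma_\alpha$ has $\lambda_g=\alpha_g$ and is right-nilpotent of degree $\leq 2$, with $\alpha$ a homomorphism $(\Gamma,+)\to\mathfrak{D}_{\Gamma}$ satisfying \eqref{eq:invariance_for_AT_skew_braces}. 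Consequently the proposition is equivalent to the one assertion that \emph{every} skew brace $(\Gamma,+,\circ)$ with $\mathrm{im}(\lambda)\leq\mathfrak{D}_{\Gamma}$ is right-nilpotent of degree $\leq 2$: granting this, \cref{pro:right_nilpotent_of_degree_2_construction} returns $\alpha:=\lambda$ as the required homomorphism with $(\Gamma,\circ)=\Gamma_\alpha$, the map $\alpha=\lambda$ is intrinsic to the brace and the brace is intrinsic to the formation, and distinct $\alpha$'s give distinct braces (since $\alpha$ is recovered as the $\lambda$-map), which yields the ``exactly one'' clause; the converse direction is immediate, as any such $\alpha$ produces via \cref{pro:right_nilpotent_of_degree_2_construction} a brace with $\mathrm{im}(\lambda)=\mathrm{im}(\alpha)\leq\mathfrak{D}_{\Gamma}$, hence a genuine formation.

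Next I would reduce right-nilpotency of degree $\leq 2$ to a statement about $\lambda$ alone. Since $\lambda$ is always a homomorphism $(\Gamma,\circ)\to\Aut(\Gamma,+)$, the brace is right-nilpotent of degree $\leq 2$ precisely when $\lambda$ is in addition a homomorphism of $(\Gamma,+)$: if $\lambda_{a+b}=\lambda_a\lambda_b$, then $\lambda_{\lambda_a(b)}=\lambda_{-a+(a\circ b)}=\lambda_a^{-1}\lambda_{a\circ b}=\lambda_a^{-1}\lambda_a\lambda_b=\lambda_b$, which is exactly the criterion isolated in the proof of \cref{pro:right_nilpotent_of_degree_2_construction}, and the converse is the first assertion there. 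Writing $d_i:=\lambda_{\sigma_i}$ and using $\sigma_i+g=\sigma_i\circ\lambda_{\sigma_i}^{-1}(g)$ gives $\lambda_{\sigma_i+g}=d_i\,\lambda_{\lambda_{\sigma_i}^{-1}(g)}$; since $\At(\Gamma)$ generates $(\Gamma,+)$ as a monoid on $\Gamma^+$ (\cref{pro:noetherian_l_groups_generated_by_atoms}), an induction on the height $H(g)$ shows that $\lambda$ is a $+$-homomorphism if and only if $\lambda_{\varphi(g)}=\lambda_g$ for all $\varphi\in\mathrm{im}(\lambda)$ and all $g$, and that this already follows from its restriction to atoms. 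Thus the whole proposition collapses to the following \textbf{key point}: the assignment $\sigma_i\mapsto d_i$ is constant on the orbits of $\mathrm{im}(\lambda)\leq\mathfrak{D}_{\Gamma}$ acting on $\At(\Gamma)$, i.e.\ $d_{\varphi(i)}=d_i$ for every $\varphi\in\mathrm{im}(\lambda)$.

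The main obstacle is precisely this orbit-constancy, where the order geometry of the rank-two intervals and the irreducibility hypothesis must enter. For atoms $i\neq j$ the interval $[e,\sigma_i\vee\sigma_j]$ is, by the analysis in the proof of \cref{bi-rigidity_of_at_groups}, the union of its two maximal chains $e\prec\sigma_i\prec\cdots\prec\sigma_i\vee\sigma_j$ and $e\prec\sigma_j\prec\cdots\prec\sigma_i\vee\sigma_j$, each of length $m_{ij}$; this description is order-theoretic and therefore valid in $(\Gamma,\circ)$ as well. Reading each covering step $p\prec p'$ as $p'=p\circ\bigl(\lambda_p^{-1}(-p+p')\bigr)$ turns each chain into a $\circ$-factorization of $\sigma_i\vee\sigma_j$ into atoms, whose factors are determined recursively by the $d$'s, and applying the homomorphism $\lambda$ yields two expressions for $\lambda_{\sigma_i\vee\sigma_j}$ as products of the $d_k$'s. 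Their equality is a genuine constraint: the two-generator computation — of which the $I_n$ are the prototype — forces $d_i=d_j$ along every edge of the Coxeter graph. Here \cref{lem:oddly_laced_to_abelian} is the natural device for evaluating these chain-products across an odd-labelled edge, while the few even-labelled edges (the label $4$ of $F_4$ and the even cases of $I_n$) and the non-abelian diagram group of $D_4$ call for the corresponding direct evaluation; connectedness of the graph (irreducibility) then propagates $d_i=d_j$ to the required constancy. I expect the uniform evaluation of these chain-products across the types $A_n,D_n,E_6,F_4,I_n$ singled out in \cref{pro:at_groups_with_non_trivial_IGam_groups} to be the technical heart of the argument, the subtlest case being the non-abelian diagram group of $D_4$.

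Once orbit-constancy is in hand, the chain of reductions above delivers that $\lambda$ is a $+$-homomorphism, hence the brace is right-nilpotent of degree $\leq 2$, and \cref{pro:right_nilpotent_of_degree_2_construction} finishes the existence part with $\alpha=\lambda$; the uniqueness and converse were already accounted for in the first paragraph. In summary, everything formal is routine given \cref{thm:IGamma_groups_are_skew_braces} and \cref{pro:right_nilpotent_of_degree_2_construction}, and the only real work is the edgewise equality $d_i=d_j$ extracted from the rank-two intervals together with its propagation through an irreducible Coxeter graph.
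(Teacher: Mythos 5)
Your global architecture agrees with the paper's: reduce via \cref{thm:IGamma_groups_are_skew_braces} to skew braces $(\Gamma,+,\circ)$ with $\mathrm{im}(\lambda)\leq\mathfrak{D}_{\Gamma}$, show every such brace is right-nilpotent of degree $\leq 2$, and let \cref{pro:right_nilpotent_of_degree_2_construction} return $\alpha=\lambda$; your uniqueness bookkeeping, the equivalence of degree $\leq 2$ with $\lambda$ being a $+$-homomorphism, and the height induction reducing this to constancy of $d_i=\lambda_{\sigma_i}$ on $\mathrm{im}(\lambda)$-orbits of atoms are all sound. The gap is exactly in the step you name the technical heart: the claim that the two chain-products in the rank-two interval $[e,\sigma_i\vee\sigma_j]$ force $d_i=d_j$ along every edge of the Coxeter graph, after which connectedness would give constancy. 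This proves too much and is false: the paper's own classification (\cref{thm:classification_irreducible_IGam_formations}, fifth row) exhibits a skew brace on the $D_4$ Artin-Tits group with $\alpha_{\sigma_i}=\delta_{(a\,b)}$ for $1\leq i\leq 3$ but $\alpha_{\sigma_4}=\delta_{(b\,c)}$, even though $m_{i4}=3$, i.e.\ node $4$ is adjacent to nodes $1,2,3$. Running your own computation on $[e,\sigma_1\vee\sigma_4]$ for this brace: using constancy of $d$ on the orbit $\{\sigma_1,\sigma_2,\sigma_3\}$, the two maximal chains yield $\lambda_{\sigma_1\vee\sigma_4}=d_1d_4d_1$ and $\lambda_{\sigma_1\vee\sigma_4}=d_4d_1d_4$, and these agree because the transpositions $(a\,b)$ and $(b\,c)$ satisfy the braid relation $(a\,b)(b\,c)(a\,b)=(b\,c)(a\,b)(b\,c)$ in $\Sym_3$. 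So the rank-two constraints only impose braid- and commutation-type relations among the $d_k$ in $\mathfrak{D}_{\Gamma}$ and are strictly weaker than edge-constancy; your propagation argument collapses precisely in the non-abelian $D_4$ case you flagged as subtlest, and no amount of rank-two bookkeeping can close it, since orbit-constancy (which \emph{does} hold in that example) is not a consequence of edgewise data.

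The paper closes the nilpotency claim by a different and much shorter mechanism that you may want to compare. Since $\Soc(\Gamma)=\ker(\lambda)$ and $\mathrm{im}(\lambda)\leq\mathfrak{D}_{\Gamma}$, the retraction $\Gamma^{(1)}=\Gamma/\Soc(\Gamma)$ is a skew brace on a group of order at most $3$ for every irreducible spherical type except $D_4$; such a brace is trivial, so the degree-$\leq 2$ conclusion is immediate. In the remaining case, $D_4$ with $\mathrm{im}(\lambda)\cong\Sym_3$, the group $(\Gamma^{(1)},+)$ has order $6$, hence is $\Z_6$ or $\Sym_3$, and \cref{lem:oddly_laced_to_abelian} — applied to $\pi:(\Gamma,+)\twoheadrightarrow(\Gamma^{(1)},+)$ in the abelian case, and to $\mathrm{sgn}\circ\varepsilon$ together with the pairwise commutation of $\sigma_1,\sigma_2,\sigma_3$ in the $\Sym_3$ case — shows that all $\bar{\sigma}_i$ coincide in $\Gamma^{(1)}$ and are fixed by diagram automorphisms modulo the socle, so $\Gamma^{(1)}$ is again a trivial brace. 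In other words, the hard case is handled by the smallness of $\mathfrak{D}_{\Gamma}$ and abelianization-type constraints, not by interval combinatorics; as written, your proposal does not prove the proposition.
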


\begin{proof}
    Let an irreducible spherical Artin-Tits groups $(\Gamma,+)$ be given. \cref{thm:IGamma_groups_are_skew_braces} shows that in order to find all $\IGam$-formations up to equivalence, we need to characterize the skew brace structures $\Gamma = (\Gamma,+,\circ)$ where $\mathrm{im}(\lambda) \leq \mathfrak{D}_{\Gamma}$. Note first that, by \cref{pro:construct_skew_braces_with_alpha}, each valid choice of $\alpha$ will result in a skew brace structure.
    
    An observation of Coxeter-Dynkin diagrams on page \pageref{tab:dynkin} shows that $|\mathfrak{D}_{\Gamma}| \leq 3$, except when $\Gamma$ is of type $D_4$. Therefore, if $\Gamma$ is an irreducible spherical Artin-Tits group and not of type $D_4$, then $(\Gamma:\Soc(\Gamma)) \leq 3$, which implies that $\Gamma/\Soc(\Gamma)$ is trivial in all of these cases and $\Gamma$ has right nilpotency degree $\leq 2$. Then, the desired representation for $(\Gamma,\circ)$ follows from \cref{pro:right_nilpotent_of_degree_2_construction}.

    We only need to pay special attention to the case when $(\Gamma,+)$ is of type $D_4$ and the image of the $\lambda$-map is the whole of $\mathfrak{D}_{\Gamma} \cong \Sym_3$. In this case $\Gamma^{(1)} = \Gamma/\Soc(\Gamma)$ is of size $6$, so either $(\Gamma^{(1)},+) \cong \Z_6$ or $(\Gamma^{(1)},+) \cong \Sym_3$.

    In the first case, we apply the fact that $(\Gamma,+)$ is oddly laced, together with \cref{lem:oddly_laced_to_abelian} to the factor map $\pi: (\Gamma,+) \twoheadrightarrow (\Gamma^{(1)},+)$, in order to show that $\Bar{\sigma_i} = \Bar{\sigma_j}$ holds in $\Gamma^{(1)}$ for all $i,j \in S$. In particular, $\overline{\delta(\sigma_i)} = \overline{\sigma_i}$ for all $\delta \in \mathfrak{D}_{\Gamma}$, $i \in S$. This implies that $\overline{\lambda_g(\sigma_i)} = \overline{\sigma_i}$ for all $g \in \Gamma$, $i \in S$. Therefore, $\Gamma^{(1)}$ is a trivial skew brace, so $\Gamma$ is of right nilpotency degree $\leq 2$ and \cref{pro:right_nilpotent_of_degree_2_construction} applies.

    In the case when $(\Gamma^{(1)},+) \cong \Sym_3$, let $\varepsilon: (\Gamma,+) \twoheadrightarrow \Sym_3$ be a homomorphism inducing this isomorphism. As $\varepsilon$ is surjective, there is an $i \in S$ with $\mathrm{sgn}(\varepsilon(\sigma_i)) = -1$, where $\mathrm{sgn}: \Sym_3 \to \{ \pm 1 \}$ is the sign homomorphism. Now \cref{lem:oddly_laced_to_abelian} applied to the homomorphism $\mathrm{sgn} \circ \varepsilon: (\Gamma,+) \to \{ \pm 1 \}$ shows that $\mathrm{sgn}(\varepsilon(\sigma_i)) = -1$ holds for all $i \in S$, so $\varepsilon$ maps all generators $\sigma_i$ to transpositions. As the generators $\sigma_i$ ($1 \leq i \leq 3$) pairwise commute (see the labelling on page \pageref{tab:dynkin}), their images $\varepsilon(\sigma_i)$ ($1 \leq i \leq 3$) are pairwise commuting transpositions, which in $\Sym_3$ implies that $\varepsilon$ maps $\sigma_1,\sigma_2,\sigma_3$ to the same element of $\Sym_3$. Therefore, $\Bar{\sigma_1} = \Bar{\sigma_2} = \Bar{\sigma_3}$ in $(\Gamma^{(1)},+)$. As each $\delta \in \mathfrak{D}_{\Gamma}$ permutes $\{ \sigma_i : 1 \leq i \leq 3 \}$ and fixes $\sigma_4$, it follows that $\overline{\delta(\sigma_i)} = \overline{\sigma_i}$ holds for all $i \in S$ and $\delta \in \mathfrak{D}_{\Gamma}$, therefore we have $\overline{\lambda_g(\sigma_i)} = \overline{\sigma_i}$ in $\Gamma^{(1)}$ for $g \in \Gamma$, $i \in S$. Again, $\Gamma^{(1)}$ is a trivial skew brace, so $\Gamma$ is of right-nilpotency degree $\leq 2$ and the desired representation follows again from \cref{pro:right_nilpotent_of_degree_2_construction}.
    
\end{proof}

We can now classify all $\IGam$-formations where $\Gamma$ is an irreducible, spherical Artin-Tits group. In the following, we will only mention the non-trivial ones. By \cref{pro:at_groups_with_non_trivial_IGam_groups}, non-trivial $\IGam$-formations only exist if $\Gamma$ is of type $A_n$ ($n \geq 2$), $D_n$ ($n \geq 4$), $E_6$, $F_4$ or $I_n$ ($n \geq 4$).

We first discuss the cases when the Coxeter-Dynkin diagram of $\Gamma$ is \emph{oddly laced}. Furthermore, suppose for now that $\Gamma$ is not of type $D_4$. In these cases, $\mathfrak{D}_{\Gamma}$ is abelian, so by \cref{lem:oddly_laced_to_abelian}, for each homomorphism $\alpha: \Gamma \to \mathfrak{D}_{\Gamma}$, there is a fixed $\id_{\Gamma} \neq \delta \in \mathfrak{D}_{\Gamma}$ such that $\alpha_{\sigma_i} = \delta$ for all $i \in S$. This is compatible with the relations of $\Gamma$, and the invariance condition \eqref{eq:invariance_for_AT_skew_braces} is clearly satisfied.

If $\Gamma$ is of type $I_n$ and $\alpha: \Gamma \to \mathfrak{D}_{\Gamma}$ is non-trivial, then one generator $\sigma_i$ satisfies $\alpha_{\sigma_i} = \delta_{(12)}$. We may assume without restriction that this is $\sigma_1$. Then \cref{eq:invariance_for_AT_skew_braces} shows that $\alpha_{\sigma_2} = \alpha_{\alpha_{\sigma_1}(\sigma_2)} = \alpha_{\sigma_1} = \delta_{(12)}$. So $\alpha_{\sigma_1} = \alpha_{\sigma_2} = \delta_{(12)}$ which is an assignment compatible with the relations in $\Gamma$.

We now consider the case when $\Gamma$ is of type $F_4$. In this case, $\mathfrak{D}_{\Gamma}$ is abelian and as $m_{12} = 3$ (see the labellings on page \pageref{tab:dynkin}), we see that for each homomorphism $\alpha: \Gamma \to \mathfrak{D}_{\Gamma}$, there is a $\delta \in \mathfrak{D}_{\Gamma}$ such that $\alpha_{\sigma_1} = \alpha_{\sigma_2} = \delta$. As we suppose that $\IGam$ is a non-trivial skew brace, there is a $g \in \Gamma$ with $\alpha_g = \delta_{(14)(23)}$, and the invariance condition \eqref{eq:invariance_for_AT_skew_braces} now implies that $\alpha_{\sigma_3} = \alpha_{\alpha_g(\sigma_1)} = \alpha_{\sigma_3} = \delta$ and similarly, $\alpha_{\sigma_2} = \alpha_{\alpha_g(\sigma_4)} = \alpha_{\sigma_4} = \delta$. Therefore, $\alpha_{\sigma_i} = \delta$ for $1 \leq i \leq 4$, which is compatible with the relations of $\Gamma$ and satisfies condition \eqref{eq:invariance_for_AT_skew_braces}.

Consider last the case when $\Gamma$ is of type $D_4$. If the Coxeter-Dynkin diagram of $\Gamma$ is oddly laced, it follows from \cref{lem:oddly_laced_to_abelian} that the homomorphisms $\alpha: \Gamma \to \mathfrak{D}_{\Gamma}$ with \emph{abelian} image in $\mathfrak{D}_{\Gamma}$ that satisfy \cref{eq:invariance_for_AT_skew_braces}, are exactly those that are given by $\alpha_{\sigma_i} = \delta$ ($1 \leq i \leq 4$) with some fixed $\mathrm{id}_{\Gamma} \neq \delta \in \mathfrak{D}_{\Gamma}$. If $\mathrm{im}(\alpha) = \mathfrak{D}_{\Gamma}$, then an argument similar to the one in the proof of \cref{pro:construction_of_irreducible_IGamma_groups} shows that all surjective homomorphisms $\alpha: \Gamma \twoheadrightarrow \mathfrak{D}_{\Gamma}$ map all $\sigma_i$ ($1 \leq i \leq 4$) to transpositions. Furthermore, there is a fixed assignment of values to $a,b,c$ such that $\{ a,b,c \} = \{ 1,2,3\}$ and
$\alpha_{\sigma_i} = \delta_{(a\ b)}$ for $1 \leq i \leq 3$ and, as $\alpha$ is surjective, $\alpha_{\sigma_4} = \delta_{(b\ c)}$. Note that this choice is indeed compatible with the relations in $\Gamma$!

We can now summarize our findings:

\begin{thm} \label{thm:classification_irreducible_IGam_formations}
    Each \emph{non-trivial} $\IGam$-formation $(H,\iota)$, where $\Gamma = (\Gamma,+)$ is an irreducible spherical Artin-Tits group, is equivalent to \emph{exactly one} $\IGam$-formation of the form $(\Gamma_{\alpha},\iota)$ with $\Gamma_{\alpha} = (\Gamma, \subcirc{\alpha})$ and $\iota = \mathrm{id}_{\Gamma}$, where $g \subcirc{\alpha} h = g + \alpha_g(h)$, with a homomorphism $\alpha: (\Gamma,+) \to \mathfrak{D}_{\Gamma}$ from the following table:

    \begin{center}
        \begin{tabular}{c|c}
   Type  & $\alpha$  \\ \hline
    $A_n$ ($n \geq 3$) & $\sigma_i \mapsto \delta_{(1\ n)(2\ n-1)\ldots}$ ($1 \leq i \leq n$)\\
    $D_n$ ($n \geq 4$) & $\sigma_i \mapsto \delta_{(1\ 2)}$ ($1 \leq i \leq n$) \\
    $D_4$ & $\sigma_i \mapsto \delta_{(a\ 3)}$ ($1 \leq i \leq 4$) with $a \in \{ 1,2 \}$ \\
    $D_4$ & $\sigma_i \mapsto \delta_{(1\ a\ b)}$ ($1 \leq i \leq 4$) with $\{ a,b \} = \{2,3 \}$ \\
    $D_4$ & $\sigma_i \mapsto \delta_{(a\ b)}$ ($1 \leq i \leq 3$), $\sigma_4 \mapsto \delta_{(b\ c)}$ with $\{a,b,c \} = \{1,2,3 \}$ \\
    $E_6$ & $\sigma_i \mapsto \delta_{(1\ 5)(2\ 4)}$ ($1 \leq i \leq 6$) \\
    $F_4$ & $\sigma_i \mapsto \delta_{(1\ 4)(2\ 3)}$ ($1 \leq i \leq 4$) \\
    $I_n$ ($n \geq 4$) & $\sigma_i \mapsto \delta_{(1\ 2)}$ ($1 \leq i \leq 2$)
\end{tabular}
    \end{center}
    
\end{thm}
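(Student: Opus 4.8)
The plan is to read this theorem as the assembly, into a single table, of the case analysis to which the classification of $\IGam$-formations has been reduced. First I would invoke \cref{pro:construction_of_irreducible_IGamma_groups}: it replaces the problem of listing equivalence classes of $\IGam$-formations by the group-theoretic problem of listing the homomorphisms $\alpha \colon (\Gamma,+) \to \mathfrak{D}_{\Gamma}$ satisfying the invariance condition \eqref{eq:invariance_for_AT_skew_braces}, with the formation recovered as $(\Gamma,\Gamma_{\alpha},\mathrm{id}_{\Gamma})$. Since that proposition asserts each formation is equivalent to \emph{exactly one} such normal form, and all normal forms share $\iota = \mathrm{id}_{\Gamma}$, two forms with $\alpha_1 \neq \alpha_2$ cannot be equivalent; hence distinct admissible $\alpha$ yield distinct non-trivial formations, and the theorem reduces to enumerating, for each type, every non-trivial $\alpha$ satisfying \eqref{eq:invariance_for_AT_skew_braces}. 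By \cref{pro:at_groups_with_non_trivial_IGam_groups} I may discard every type with trivial diagram-automorphism group, leaving only $A_n$, $D_n$, $E_6$, $F_4$ and $I_n$, for each of which I read off $\mathfrak{D}_{\Gamma}$ from the Coxeter graphs on page~\pageref{tab:dynkin}.

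The enumeration then splits according to whether $\mathrm{im}(\alpha)$ is abelian. When $\mathrm{im}(\alpha)$ is abelian — automatic for $A_n$, $D_n$ with $n \geq 5$, $E_6$, and holding for the cyclic-image assignments in $D_4$ — I would regard $\alpha$ as a homomorphism into the abelian group $\mathrm{im}(\alpha)$ and apply \cref{lem:oddly_laced_to_abelian} to force $\alpha$ constant on generators, $\alpha_{\sigma_i} = \delta$ for a single $\id_{\Gamma} \neq \delta \in \mathfrak{D}_{\Gamma}$; the invariance condition \eqref{eq:invariance_for_AT_skew_braces} is then automatic, because every diagram automorphism permutes the generators and so preserves the length of $h$, i.e.\ its image in the abelianization. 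Running $\delta$ through the non-identity elements of $\mathfrak{D}_{\Gamma}$ produces the single rows for $A_n$, $D_n$, $E_6$, and for $D_4$ all five constant assignments, namely the three transpositions (one of which, $\delta_{(1\,2)}$, already coincides with the generic $D_n$ row) and the two $3$-cycles. The type $F_4$ is not oddly laced, so instead I exploit the odd edges $m_{12}=m_{34}=3$ to force $\alpha_{\sigma_1}=\alpha_{\sigma_2}$ and $\alpha_{\sigma_3}=\alpha_{\sigma_4}$, and then use \eqref{eq:invariance_for_AT_skew_braces} to identify all four values into one constant $\delta_{(1\,4)(2\,3)}$; for $I_n$, which has only the generators $\sigma_1,\sigma_2$, the invariance condition alone gives $\alpha_{\sigma_2}=\alpha_{\alpha_{\sigma_1}(\sigma_2)}=\alpha_{\sigma_1}=\delta_{(1\,2)}$.

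The remaining, genuinely delicate, situation — which I expect to be the main obstacle — is $D_4$ with $\mathfrak{D}_{\Gamma} \cong \Sym_3$ and $\alpha$ \emph{surjective}, where $\mathfrak{D}_{\Gamma}$ is non-abelian and \cref{lem:oddly_laced_to_abelian} does not apply to $\alpha$ itself. My plan is to compose with the sign homomorphism $\mathrm{sgn} \colon \Sym_3 \to \{\pm 1\}$, apply \cref{lem:oddly_laced_to_abelian} to the abelian-valued $\mathrm{sgn}\circ\alpha$ to see that all $\alpha_{\sigma_i}$ have a common sign, and then invoke surjectivity to conclude that each $\alpha_{\sigma_i}$ is a transposition. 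Since $\sigma_1,\sigma_2,\sigma_3$ are the three outer legs of the $D_4$ graph and hence pairwise commute, their images are pairwise commuting transpositions in $\Sym_3$, which forces $\alpha_{\sigma_1}=\alpha_{\sigma_2}=\alpha_{\sigma_3}=\delta_{(a\,b)}$; using that $\mathfrak{D}_{\Gamma}$ permutes $\{\sigma_1,\sigma_2,\sigma_3\}$ and fixes $\sigma_4$, together with surjectivity and \eqref{eq:invariance_for_AT_skew_braces}, I would pin down $\alpha_{\sigma_4}=\delta_{(b\,c)}$, yielding the final $D_4$ row. I would close by checking that each assignment in the table is compatible with the defining relations of $\Gamma$ and satisfies \eqref{eq:invariance_for_AT_skew_braces}, so that the list is simultaneously complete and, by the injectivity established in the first paragraph, irredundant — which is precisely the statement of the theorem.
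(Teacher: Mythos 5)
Your proposal is correct and takes essentially the same route as the paper: it reduces via \cref{pro:construction_of_irreducible_IGamma_groups} to enumerating non-trivial homomorphisms $\alpha\colon(\Gamma,+)\to\mathfrak{D}_{\Gamma}$ satisfying \eqref{eq:invariance_for_AT_skew_braces}, forces constancy on generators with \cref{lem:oddly_laced_to_abelian} in the oddly laced abelian-image cases, handles $F_4$ and $I_n$ through the odd edges plus the invariance condition, and settles the surjective $D_4$ case by the sign-homomorphism and pairwise-commuting-transpositions argument, exactly as the paper does (the latter argument appearing in the proof of \cref{pro:construction_of_irreducible_IGamma_groups} and being cited in the discussion preceding the theorem). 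The uniqueness claim is likewise correctly inherited from the ``exactly one'' clause of \cref{pro:construction_of_irreducible_IGamma_groups}, so there is no gap.
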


\begin{rem}
    Let $\Gamma = (\Gamma,+)$ be an arbitrary spherical Artin-Tits group and let $(H,\iota)$ be an $\IGam$-formation. By \cref{thm:IGamma_groups_are_skew_braces}, this formation corresponds to a skew brace structure $(\Gamma,+,\circ)$ with $\mathrm{im} (\lambda) \leq \mathfrak{D}_{\Gamma}$. It is well-known that for an Artin-Tits group $\Gamma$, the element $\Delta = \bigvee \At(\Gamma)$ is a Garside element (see \cite[§5]{ABS_Coxeter}). Furthermore, $\Delta$ is fixed under $\mathfrak{D}_{\Gamma}$, so $\lambda_g(\Delta) = \Delta$ for all $g \in \Gamma$. Now pick $k > 0$ such that $\Tilde{\Delta} = k \Delta \in \Zen(\Gamma,+) \cap \Soc(\Gamma)$, then $\Tilde{\Delta} \in \Gamma^+$ and for all $g \in \Gamma$, we have
    \[
    \Tilde{\Delta} \circ g = \Tilde{\Delta} + g = g + \Tilde{\Delta} = g \circ \lambda_{g}^{-1}(\Tilde{\Delta}) = g \circ \lambda_{g}^{-1}(k \cdot \Delta) = g \circ (k \cdot \Delta) = g \circ \Tilde{\Delta}.
    \]
    so $\Tilde{\Delta} \in \Zen(\Gamma,\circ)$ which implies $\Tilde{\Delta} \circ \Gamma^+ = \Gamma^+ \circ \Tilde{\Delta}$. In particular, the right- and left-divisors of $\Tilde{\Delta}$ in $(\Gamma^+,\circ)$ coincide. Furthermore, as $k \cdot \Tilde{\Delta} = \Tilde{\Delta}^k$ for all integers $k$, it follows that $\Gamma^+$ is a Garside monoid for $(\Gamma,\circ)$ with Garside element $\Tilde{\Delta}$. Therefore, groups of $\IGam$-type are Garside groups whenever $\Gamma$ is a spherical Artin-Tits group.
\end{rem}

\begin{rem}
    If $\Gamma$ is of type $I_n$ ($n \geq 2$), we have seen in \cref{thm:classification_irreducible_IGam_formations} that there is exactly one non-trivial $\IGam$-formation $(H,\iota)$. Considering the interval $[e,\Delta]_{\Gamma}$ where $\Delta = r_n(\sigma_1,\sigma_2) = r_n(\sigma_2,\sigma_1)$, one can check that $\Tilde{\Delta} = \iota^{-1}(\Delta)$ is central in $H$ and an argument similar to the one in the previous remark shows that $\Tilde{\Delta}$ is a Garside element for $H$. Reading off the relations of $H$ from $[e,\Tilde{\Delta}]$, one obtains that $H \cong \genrel{a,b}{a^n = b^n}_{\mathrm{gr}}$, which is the \emph{torus-type} group $T_{n,n}$. \cite[Example I.2.7]{Foundations_Garside}.
\end{rem}

\begin{rem}
    Note that any spherical Artin-Tits group $\Gamma$ decomposes as a direct product $\Gamma = \prod_{i \in I} \Gamma_i$ where each $\Gamma_i$ is an irreducible Artin-Tits group. This decomposition corresponds to the decomposition of the Coxeter-Dynkin diagram into connected components. Identifying $i \sim j$ whenever $\Gamma_i$ and $\Gamma_j$ are of the same type, one obtains an equivalence relation on $I$. Note that $i \sim j$ if and only if the Coxeter-Dynkin diagrams of $\Gamma_i$ and $\Gamma_j$ are equivalent. Now putting $\mathcal{I} = I/\!\!\sim$, one obtains a coarser decomposition
    \[
    \Gamma = \prod_{J \in \mathcal{I}} \Gamma_J, \textnormal{ where} \quad \Gamma_J = \prod_{i \in J} \Gamma_i.
    \]
    We see that $\Gamma_J \unlhd \Gamma$, as $\Gamma_J$ is a direct factor. Furthermore, each diagram automorphism leaves unions of equivalent connected components setwise invariant, so all $\Gamma_J$ are invariant subgroups under $\mathfrak{D}_{\Gamma}$.

    Given a skew brace structure $(\Gamma,+,\circ)$ corresponding to an $\IGam$-formation $(H,\iota)$ via \cref{thm:IGamma_groups_are_skew_braces}, it follows that the subgroups $\Gamma_J$ ($J \in \mathcal{I}$) are strong left ideals of the skew brace structure on $\Gamma$. As $(\Gamma,+)$ is a direct product of the components $\Gamma_J$, one obtains that $(\Gamma,\circ)$ is a matched product of the permutable subgroups $(\Gamma_J,\circ)$ ($J \in \mathcal{I}$), so $H$ is a matched product of the permutable subgroups $\iota^{-1}(\Gamma_J)$ ($J \in \mathcal{I}$).
\end{rem}

\bibliography{references}
\bibliographystyle{abbrv}

\end{document}